\newcommand{\XYMATRIX}{\xymatrix@M=6pt}
\newcommand{\aremb}{\ar@{^{(}->}}
\newcommand{\arembfrom}{\ar@{<-^{)}}}
\numberwithin{equation}{section}
\def\labelenumi{(\roman{enumi})}
\theoremstyle{plain}
\newtheorem{THM}{Theorem}[section]
\newtheorem{LEM}[THM]{Lemma}
\newtheorem{COR}[THM]{Corollary}
\newtheorem{PROP}[THM]{Proposition}
\theoremstyle{definition}
\newtheorem{DEF}[THM]{Definition}
\newtheorem{EX}[THM]{Example}
\theoremstyle{remark}
\newcommand{\preclex}{\mathrel{\prec_{\mathit{lex}}}}
\renewcommand{\le}{\leqslant}
\renewcommand{\ge}{\geqslant}
\newcommand{\0}{\varnothing}
\renewcommand{\sec}{\cap}
\renewcommand{\phi}{\varphi}
\renewcommand{\epsilon}{\varepsilon}
\newcommand{\UNION}{\bigcup}
\newcommand{\CC}{\mathbf{C}}
\newcommand{\DD}{\mathbf{D}}
\newcommand{\FF}{\mathbf{F}}
\newcommand{\KK}{\mathbf{K}}
\newcommand{\QQ}{\mathbb{Q}}
\newcommand{\RR}{\mathbb{R}}
\newcommand{\ZZ}{\mathbb{Z}}
\newcommand{\union}{\cup}
\newcommand{\restr}[2]{\hbox{$#1$}\hbox{$\upharpoonright$}_{#2}}
\newcommand{\reduct}[2]{\hbox{$#1$}\hbox{$|$}_{#2}}
\newcommand{\select}[2]{#1:#2}
\newcommand{\Boxed}[1]{\mbox{$#1$}}
\newcommand{\id}{\mathrm{id}}
\newcommand{\Ob}{\mathrm{Ob}}
\newcommand{\arity}{\mathrm{ar}}
\newcommand{\Arr}{\mathrm{Arr}}
\newcommand{\spec}{\mathrm{spec}}
\newcommand{\cod}{\mathrm{cod}}
\newcommand{\canlongrightarrow}{\mathrel{\overset{\mathit{can}}{\longrightarrow}}}
\newcommand{\calA}{\mathcal{A}}
\newcommand{\calB}{\mathcal{B}}
\newcommand{\calC}{\mathcal{C}}
\newcommand{\calD}{\mathcal{D}}
\newcommand{\calE}{\mathcal{E}}
\newcommand{\calF}{\mathcal{F}}
\newcommand{\calG}{\mathcal{G}}
\newcommand{\calH}{\mathcal{H}}
\newcommand{\calM}{\mathcal{M}}
\newcommand{\calQ}{\mathcal{Q}}
\newcommand{\calS}{\mathcal{S}}
\newcommand{\calU}{\mathcal{U}}
\newcommand{\calV}{\mathcal{V}}
\newcommand{\calW}{\mathcal{W}}
\newcommand{\calX}{\mathcal{X}}
\newcommand{\calY}{\mathcal{Y}}
\newcommand{\bfr}{\overline{r}}
\newcommand{\bfs}{\overline{s}}
\newcommand{\CHemb}{\mathbf{Ch}}
\newcommand{\GraEmb}{\mathbf{Gra}}
\newcommand{\HGraEmb}{\mathbf{Hgr}}
\newcommand{\TourEmb}{\mathbf{Tour}}
\newcommand{\EDigEmb}{\mathbf{EDig}}
\newcommand{\EPosEmb}{\mathbf{Pos}}
\newcommand{\MetIso}{\mathbf{Met}}
\newcommand{\REL}{\mathbf{Rel}}
\newcommand{\OOGRAEmb}{\mathbf{OGra}}
\DeclareMathOperator{\dom}{dom}
\DeclareMathOperator{\Forb}{Forb}
\DeclareMathOperator{\tp}{tp}
\DeclareMathOperator{\mat}{mat}
\DeclareMathOperator{\tup}{tup}
\title{Canonizing structural Ramsey theorems}
\author{%
  Dragan Ma\v sulovi\'c\\
  University of Novi Sad, Faculty of Sciences\\
  Department of Mathematics and Informatics\\
  Trg Dositeja Obradovi\'ca 3, 21000 Novi Sad, Serbia\\
  e-mail: dragan.masulovic@dmi.uns.ac.rs}
\begin{document}
\maketitle

\begin{abstract}
  At the beginning of 1950's Erd\H os and Rado suggested the investigation of the Ramsey-type results where
  the number of colors is not finite. This marked the birth of the so-called canonizing Ramsey theory.
  In 1985 Pr\"omel and Voigt made the first step towards the structural canonizing Ramsey theory
  when they proved the canonical Ramsey property for the class of finite linearly ordered hypergraphs,
  and the subclasses thereof defined by forbidden substructures.
  Building on their results in this paper we provide several new structural
  canonical Ramsey results. We prove the canonical Ramsey theorem for the class of all finite linearly ordered
  tournaments, the class of all finite posets with linear extensions and the class of all finite linearly
  ordered metric spaces. We conclude the paper with the canonical version of the celebrated
  Ne\v set\v ril-R\"odl Theorem. In contrast to the ``classical'' Ramsey-theoretic approach, in this
  paper we advocate the use of category theory to manage the complexity of otherwise technically
  overwhelming proofs typical in canonical Ramsey theory.
  
  \bigskip

  \noindent \textbf{Key Words:} canonizing Ramsey theory, finite structures, category theory

  \noindent \textbf{AMS Subj.\ Classification (2010):} 05C55, 18A99
\end{abstract}

\section{Introduction}

The leitmotif of Ramsey theory is to prove the existence of regular patterns that occur when a large structure
is considered in a restricted context. It started with the following result of F.~P.~Ramsey~\cite{Ramsey}:

\begin{THM}[Ramsey Theorem \cite{Ramsey}]
  For positive integers $k$ and $r$ and an arbitrary coloring $\chi : \binom \omega k \to \{1, 2, \ldots, r\}$
  there exists an infinite set $S \subseteq \omega$ such that $\chi(X) = \chi(Y)$ for all $X, Y \in \binom S k$.
\end{THM}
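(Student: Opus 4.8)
The plan is to argue by induction on $k$, using the classical ``diagonal'' construction of a nested family of infinite sets.

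\textbf{Base case.} For $k = 1$ the assertion is just the infinite pigeonhole principle: the finitely many classes $\chi^{-1}(1), \ldots, \chi^{-1}(r)$ partition the infinite set $\omega$, so at least one of them, say $\chi^{-1}(i)$, is infinite, and we may take $S = \chi^{-1}(i)$.

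\textbf{Inductive step.} Suppose the statement holds for $k$ and let $\chi : \binom{\omega}{k+1} \to \{1, \ldots, r\}$ be arbitrary. I would build recursively a strictly increasing sequence $a_0 < a_1 < \cdots$ in $\omega$, a decreasing chain of infinite sets $\omega = B_{-1} \supseteq B_0 \supseteq B_1 \supseteq \cdots$, and colors $c_0, c_1, \ldots \in \{1, \ldots, r\}$, as follows. Given the infinite set $B_{i-1}$, set $a_i = \min B_{i-1}$ and define $\chi_i : \binom{B_{i-1} \setminus \{a_i\}}{k} \to \{1, \ldots, r\}$ by $\chi_i(Y) = \chi(\{a_i\} \cup Y)$; by the induction hypothesis choose an infinite $B_i \subseteq B_{i-1} \setminus \{a_i\}$ on which $\chi_i$ is constantly equal to some color $c_i$. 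Observe that $a_j \in B_i$ whenever $j > i$, since then $a_j = \min B_{j-1}$ and $B_{j-1} \subseteq B_i$.

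\textbf{Extracting $S$.} The set $A = \{a_i : i \in \omega\}$ is infinite; color it by $a_i \mapsto c_i$ and apply the pigeonhole principle to obtain an infinite $I \subseteq \omega$ and a color $c$ with $c_i = c$ for every $i \in I$. Put $S = \{a_i : i \in I\}$. To verify that $S$ is as required, take any $X \in \binom{S}{k+1}$, let $a_i = \min X$ (so $i \in I$), and note that each remaining element of $X$ is some $a_j$ with $j \in I$ and $j > i$, hence lies in $B_i$; therefore $X \setminus \{a_i\} \in \binom{B_i}{k}$ and $\chi(X) = \chi_i(X \setminus \{a_i\}) = c_i = c$, a value independent of the choice of $X$.

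The construction involves no genuine difficulty beyond the bookkeeping; the one step that demands attention is the verification that $X \setminus \{\min X\} \subseteq B_i$ for $X \in \binom{S}{k+1}$, which is precisely why the sets $B_i$ are chosen both nested and disjoint from all previously selected $a_j$. One can instead give a slicker proof via a non-principal ultrafilter on $\omega$, showing by induction on $k$ that every ultrafilter-large set has an ultrafilter-large monochromatic subset; but the explicit recursion above is the more elementary route and keeps the combinatorial content transparent.
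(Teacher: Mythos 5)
Your proof is correct: it is the standard induction-on-$k$ argument (pigeonhole base case, the nested sequence of infinite sets $B_i$ with $a_i=\min B_{i-1}$, and a second application of the pigeonhole principle to the colors $c_i$), and the key verification that $X\setminus\{\min X\}\subseteq B_i$ for $X\in\binom{S}{k+1}$ is carried out properly. Note that the paper itself gives no proof of this statement --- it is quoted as classical background from Ramsey's 1930 paper --- so there is nothing to compare against; the only small point worth making explicit is that when you apply the induction hypothesis to $\chi_i$ on $\binom{B_{i-1}\setminus\{a_i\}}{k}$ you are using the relativized form of the theorem for an arbitrary infinite subset of $\omega$, which follows from the stated form by transporting the coloring along the order isomorphism of that subset with $\omega$.
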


\noindent
Here, $\omega = \{0, 1, 2, \ldots\}$, and for a set $S$ and a positive integer $k$ by $\binom Sk$ we denote the
set of all the $k$-element subsets of~$S$. Its finite version takes the following form.

\begin{THM}[Finite Ramsey Theorem \cite{Ramsey}]
  For positive integers $k$, $m$ and $r$ there exists an integer $n$ such that for every coloring
  $\chi : \binom nk \to \{1, 2, \ldots, r\}$
  there exists a set $S \in \binom nm$ such that $\chi(X) = \chi(Y)$ for all $X, Y \in \binom S k$.
\end{THM}

Generalizing the Finite Ramsey Theorem, the structural Ramsey theory originated at
the beginning of 1970’s in a series of papers (see \cite{N1995} for references).
We say that a class $\KK$ of finite structures has the \emph{Ramsey property} if the following holds:
for any number $k \ge 2$ of colors and all $\calA, \calB \in \KK$ such that $\calA$ embeds into $\calB$
there is a $\calC \in \KK$
such that no matter how we color the copies of $\calA$ in $\calC$ with $k$ colors, there is a \emph{monochromatic} copy
$\calB'$ of $\calB$ in $\calC$ (that is, all the copies of $\calA$ that fall within $\calB'$ are colored by the same color).

Many natural classes of structures (such as finite graphs, metric spaces and posets, just to
name a few) do not have the Ramsey property. It is quite common,
though, that after expanding the structures under consideration
with appropriately chosen linear orders, the resulting class of expanded
structures has the Ramsey property. For example, the class of all finite
linearly ordered graphs $(V, E, \Boxed<)$, where $(V, E)$ is a finite graph and $<$ is
a linear order on the set $V$ of vertices of the graph, has the Ramsey property~\cite{AH,Nesetril-Rodl-1976}.
The same is true for metric spaces~\cite{Nesetril-metric}.
In case of finite posets the class of all the structures
$(P, \Boxed\sqsubseteq, \Boxed<)$ where $(P, \Boxed\sqsubseteq)$ is
a finite poset and $<$ is a linear order on $P$ which extends~$\sqsubseteq$ has the Ramsey property~\cite{PTW,fouche}.

One of the cornerstones of the structural Ramsey theory is the famous Ne\v set\v ril-R\"odl Theorem
whose formulation requires some terminology.
Let $\Theta = (R_i)_{i \in I}$ be a sequence of finitary relational symbols.
A \emph{linearly ordered $\Theta$-structure} $\calA = (A, \Theta^\calA, \Boxed{<^\calA})$
is a set $A$ together with a sequence $\Theta^\calA = (R_i^\calA)_{i \in I}$ of finitary relations on $A$ (which are
the interpretations of the symbols in $\Theta$), and with a linear order~$<^\calA$ on~$A$.
A finite linearly ordered $\Theta$-structure $\calA = (A, \Theta^\calA, \Boxed{<^\calA})$
is \emph{irreducible} if for every $a, b \in A$ such that $a \ne b$ there is an $i \in I$ and a tuple
$(x_1, x_2, \ldots, x_{r_i}) \in R_i^\calA$ such that $x_p = a$ and $x_q = b$ for some~$p, q \in \{1, \ldots, r_i\}$
(here, $r_i$ is the arity of $R_i$).
For a family $\FF$ of irreducible finite linearly ordered $\Theta$-structures let
$\Forb_{\Theta, \Boxed<}(\FF)$ denote the class of all finite linearly ordered $\Theta$-structures $\calA$ such that
no structure from $\FF$ embeds into $\calA$ (so, $\FF$ is the family of \emph{forbidden} substructures).

\begin{THM}[The Ne\v set\v ril-R\"odl Theorem \cite{Nesetril-Rodl,Nesetril-Rodl-1983,Nesetril-Rodl-1989}]\label{canrp.thm.NRT}
  Let $\Theta$ be an arbitrary sequence of finitary relational symbols and let $\FF$ be a family of
  irreducible finite linearly ordered $\Theta$-structures. Then $\Forb_{\Theta, \Boxed<}(\FF)$ has the Ramsey property.
\end{THM}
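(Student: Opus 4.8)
The plan is to prove Theorem~\ref{canrp.thm.NRT} by the Ne\v set\v ril--R\"odl \emph{partite construction}. Its two engines are a \emph{Partite Lemma}, which establishes a Ramsey statement for structures carrying a compatible partition into ``parts,'' and an iterated amalgamation that upgrades this from partite structures to arbitrary members of $\Forb_{\Theta,<}(\FF)$. The role of irreducibility of the members of $\FF$ is decisive throughout: an irreducible structure cannot be embedded ``across'' a free amalgam, so free amalgamation preserves $\Forb_{\Theta,<}(\FF)$, and every structure produced by the construction is, by design, a free amalgam of copies of structures already known to lie in $\Forb_{\Theta,<}(\FF)$. Concretely, I would first set up the category of \emph{$\calA$-partite structures}: a finite linearly ordered $\Theta$-structure $\calP$ with a homomorphic projection $\pi_\calP\colon\calP\to\calA$ whose fibres $\pi_\calP^{-1}(x)$ are independent (no relation tuple uses two vertices of the same fibre), morphisms being embeddings over $\calA$. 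The objects of interest inside $\calP$ are the \emph{transversal copies} of $\calA$ --- the embeddings $\calA\hookrightarrow\calP$ splitting $\pi_\calP$, each meeting every part in one vertex in the order given by $<^\calA$; and $\calP\longrightarrow(\calM)^\calA_k$ abbreviates ``every $k$-colouring of the transversal copies of $\calA$ in $\calP$ admits a transversal copy of $\calM$ on which it is constant.''

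The \emph{Partite Lemma} states: for every $\calA$-partite structure $\calM$ and every $k\ge 2$ there is an $\calA$-partite structure $\calN$ with $\calN\longrightarrow(\calM)^\calA_k$, and if $\calM\in\Forb_{\Theta,<}(\FF)$ then $\calN$ may be taken in $\Forb_{\Theta,<}(\FF)$. I would prove it by induction on the number $|A|$ of parts. The base case $|A|=1$ is the pigeonhole principle on a single sufficiently fat part (no non-unary relations occur inside one part, so a long enough chain of vertices of the appropriate unary type contains an ordered copy of $\calM$). In the inductive step one isolates the last part, replaces it by a combinatorial cube of large (Hales--Jewett) dimension, applies the induction hypothesis to the structure carried by the remaining parts, and invokes the Hales--Jewett theorem to extract a combinatorial line that pulls back to a transversal copy of $\calM$ on which the colouring is constant; since no tuple is created inside a fibre and the remaining parts are only duplicated and glued freely, membership in $\Forb_{\Theta,<}(\FF)$ is preserved automatically here.

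The \emph{partite construction} then runs as follows. Given $\calA\hookrightarrow\calB$ in $\Forb_{\Theta,<}(\FF)$ and $k\ge 2$, put $a=|A|$, $n=|B|$, and use the Finite Ramsey Theorem to fix $N$ with $N\to(n)^a_k$. Let $\calP_0$ be the free $\calB$-partite structure whose transversal copies of $\calB$ are indexed by the $n$-element subsets of $[N]$ (one transversal copy of $\calB$ per subset, a tuple lying in a relation of $\calP_0$ iff it does inside one of these copies); irreducibility of $\FF$ gives $\calP_0\in\Forb_{\Theta,<}(\FF)$. Enumerate the finitely many embeddings $\calA\hookrightarrow\calB$ as $\alpha_1,\dots,\alpha_t$ and build $\calP_0,\calP_1,\dots,\calP_t$: given $\calP_{i-1}$, read it through $\alpha_i$ as an $\calA$-partite structure, apply the Partite Lemma to obtain a partite Ramsey object $\calN_i$, and let $\calP_i$ be the free amalgam of copies of $\calP_{i-1}$ over $\calN_i$ (one copy per transversal copy of the relevant $\calA$-partite template) --- a free amalgamation away from the shared fibres, hence again in $\Forb_{\Theta,<}(\FF)$ by irreducibility. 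The construction is arranged so that after step $i$, inside a suitable sub-copy of $\calP_{i-1}$ the colour of any transversal copy of $\calA$ of pattern $\alpha_i$ depends only on its image under the projection onto the reference set $[N]$; after all $t$ steps the entire colouring factors through that projection, and the choice $N\to(n)^a_k$ then produces an $n$-subset of $[N]$ whose transversal copy of $\calB$ in $\calP_t$ is monochromatic. Forgetting the partition, $\calC:=\calP_t$ is the required witness.

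The crux of the argument --- and the step I expect to be the main obstacle --- is the bookkeeping behind the two preservation claims, namely that $\calP_0$ and each $\calP_i$ genuinely lie in $\Forb_{\Theta,<}(\FF)$. The content is always the same: the structure in question is a colimit of copies of a smaller structure glued along a common independent sub-part, so it contains no relation tuple straddling two of the copies outside the glued fibres; an irreducible structure, having ``no room'' to spread across two copies, must therefore embed into a single copy, contradicting the inductive hypothesis. I would make this precise --- and tame the proliferation of indices inherent in the classical presentation --- by carrying out the whole partite construction diagrammatically in the category of $\calA$-partite (resp.\ $\calB$-partite) structures, realizing each $\calP_i$ as an explicit pushout-type colimit and isolating one lemma to the effect that free colimits of structures over independent common sub-structures preserve $\Forb_{\Theta,<}(\FF)$; the remainder then reduces to routine tracking of transversal copies through the diagram.
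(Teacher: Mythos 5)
You should first be aware that the paper contains no proof of Theorem~\ref{canrp.thm.NRT}: it is imported verbatim from the cited Ne\v set\v ril--R\"odl papers, and the paper's own work concerns the \emph{canonical} analogue (Theorem~\ref{canrp.thm.cNRT}), which is obtained by an entirely different mechanism --- encode finite linearly ordered $\Theta$-structures as linearly ordered $\bfs$-hypergraphs via the type/matrix/tuple correspondence ($\tp$, $\mat$, $\tup$), check that this is an isomorphism of categories carrying $\FF$ to a family of irreducible hypergraphs, reduce infinite signatures by the polymer argument, and quote the Pr\"omel--Voigt hypergraph theorem. Your proposal instead reproves the statement from scratch by the partite construction, i.e.\ it follows the route of the original references rather than anything in this paper; a ``paper-style'' proof of the same statement would simply run the $\dagger/*$ encoding of Section~\ref{canrp.sec.cNRT} and invoke the Ne\v set\v ril--R\"odl set-system (hypergraph) Ramsey theorem. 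The trade-off is clear: the paper's route outsources all the hard combinatorics to a quoted theorem and only performs a transparent translation, while yours is self-contained but must carry the full weight of the Partite Lemma and the amalgamation bookkeeping.

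Judged on its own terms, your sketch has the right overall shape (Partite Lemma via Hales--Jewett, iterated amalgamation of pictures, irreducibility forcing forbidden structures into a single glued copy, final appeal to $N \to (n)^a_k$), but two points would need repair. First, the bookkeeping of the construction is off: the pictures $\calP_0, \calP_1, \ldots$ must be partite \emph{over the Ramsey set} $[N]$, i.e.\ carry a projection onto $[N]$ with $N \to (|B|)^{|A|}_k$, and the induction must run over the $\binom{N}{a}$ possible traces (the $a$-element subsets of $[N]$ onto which a copy of $\calA$ can project), applying the Partite Lemma to the subpicture spanned by the parts of one trace. Indexing the steps by the embeddings $\alpha_1, \ldots, \alpha_t : \calA \hookrightarrow \calB$ does not typecheck: ``reading $\calP_{i-1}$ through $\alpha_i$ as an $\calA$-partite structure'' is undefined when $\calP_{i-1}$ has $N$ parts, and copies of $\calA$ of a fixed pattern $\alpha_i$ sitting in different copies of $\calB$ have different traces, so no single application of the Partite Lemma controls them; likewise your $\calP_0$, described as ``$\calB$-partite,'' must in fact be $[N]$-partite for the phrase ``projection onto $[N]$'' in your concluding step to mean anything. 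Second, the claim that membership in $\Forb_{\Theta,<}(\FF)$ is preserved ``automatically'' in the Partite Lemma step is not justified: the Hales--Jewett power must be set up so that every relation tuple lies inside (a coordinate copy of) $\calM$, and only then does irreducibility let one push an alleged embedding of some $F \in \FF$ into a single copy of $\calM$; in the classical proofs this, together with arranging that every tuple of a picture lies in a copy of $\calB$, is precisely where the work is. You flag this as ``the main obstacle'' but then assert rather than prove it, so as written the argument has a genuine gap at its crux, albeit one that the standard literature shows how to close.
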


At the beginning of 1950's Erd\H os and Rado suggested the investigation of the Ramsey-type results where
the number of colors is not finite. Their paper~\cite{Erdos-Rado-1950} marked the birth of the so-called
canonizing Ramsey theory.
Before we state the famous Erd\H os-Rado Canonization Theorem let us introduce a piece of notation.
Take any (possibly empty) $Q \subseteq \{1, 2, \ldots, k\}$.
For a $k$-element set $X = \{x_1, x_2, \ldots, x_k\} \subseteq \omega$ where $x_1 < x_2 < \ldots < x_k$
let $\select XQ = \{x_q : q \in Q\}$. (Note that $\select X\0 = \0$.)

\begin{THM}[Erd\H os-Rado Canonization Theorem \cite{Erdos-Rado-1950}]
  For a positive integer $k$ and an arbitrary coloring $\chi : \binom \omega k \to \omega$ there exists an infinite
  set $S \subseteq \omega$ and a possibly empty set $Q \subseteq \{1, 2, \ldots, k\}$ such that for all
  $X, Y \in \binom S k$ we have:
  $\chi(X) = \chi(Y)$ if and only if $\select XQ = \select YQ$.
\end{THM}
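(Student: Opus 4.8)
The plan is to run the classical two‑phase argument: first collapse the infinite palette to a finite one so that the (finitely colored) Ramsey Theorem applies, and then read off the canonical pattern from the resulting homogeneous set. To each $2k$-element subset $W=\{w_1<\dots<w_{2k}\}\subseteq\omega$ I attach the equivalence relation $E_W$ on the finite set $\binom{\{1,\dots,2k\}}{k}$ given by $I\mathrel{E_W}J$ iff $\chi(\{w_i:i\in I\})=\chi(\{w_j:j\in J\})$. There are only finitely many equivalence relations on $\binom{\{1,\dots,2k\}}{k}$, so $W\mapsto E_W$ is a coloring of $\binom\omega{2k}$ with finitely many colors; by the Ramsey Theorem (used with $k$ replaced by $2k$) there is an infinite $S\subseteq\omega$ and a fixed equivalence relation $E$ with $E_W=E$ for all $W\in\binom S{2k}$.

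Next I extract the combinatorial content of this homogeneity. Any $X,Y\in\binom Sk$ satisfy $|X\cup Y|\le 2k$, hence lie in a common $W\in\binom S{2k}$; transporting $X,Y$ along the order isomorphism $W\to\{1,\dots,2k\}$ to $I,J\subseteq\{1,\dots,2k\}$, one has $\chi(X)=\chi(Y)\iff I\mathrel EJ$. Since this answer cannot depend on the auxiliary $W$, strong compatibility constraints are forced on $E$; in particular, for $X,Y\in\binom Sk$ that differ in exactly one coordinate, whether $\chi(X)=\chi(Y)$ depends only on the index $i$ of that coordinate. I then define $Q$ to be the set of indices $i\in\{1,\dots,k\}$ such that $\chi(X)\ne\chi(Y)$ whenever $X,Y\in\binom Sk$ differ in exactly coordinate $i$; by the preceding remark $Q$ is well defined. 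The claim will be that $S$ and this $Q$ witness the theorem.

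For the ``if'' direction, suppose $\select XQ=\select YQ$, i.e.\ $X$ and $Y$ carry the same value in every coordinate indexed by $Q$. These common values cut $S$ into intervals, and inside each interval $X$ and $Y$ have the same number of ``free'' (non-$Q$) coordinates, both ranging over the same subset of $S$; a routine token‑sliding argument (the relevant configuration graph is connected) then connects $X$ to $Y$ by a chain in $\binom Sk$ in which each step changes a single coordinate whose index is \emph{not} in $Q$. By the definition of $Q$ every such step preserves the color, so $\chi(X)=\chi(Y)$. In particular, $\chi$ restricted to $\binom Sk$ factors through $X\mapsto\select XQ$.

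The ``only if'' direction is the main obstacle: one must show that $\select XQ\ne\select YQ$ forces $\chi(X)\ne\chi(Y)$. Using the ``if'' direction to slide the free coordinates without affecting colors or $Q$-parts, one first reduces to the case where $X$ and $Y$ agree outside a nonempty set $T\subseteq Q$ of coordinates. If $|T|=1$ then $X$ and $Y$ differ in a single coordinate with index in $Q$, directly contradicting the definition of $Q$. For $|T|\ge 2$ one assumes $\chi(X)=\chi(Y)$ and exploits that, by the homogeneity of $S$, ``this order pattern of a pair has equal colors'' is a \emph{universal} statement about copies inside $S$: instantiating it at suitably chosen auxiliary pairs in $S$ and chaining the resulting color equalities (using transitivity of ``equal color'' together with the factorization from the ``if'' direction) one propagates the equality so as to reduce to the case of a pair differing in exactly two $Q$-coordinates, and then dispatches the finitely many such base patterns by a further chaining argument that again contradicts the definition of $Q$. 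The delicate points are the choice of the auxiliary configurations and this last reduction; the bookkeeping of order patterns there is where essentially all the work resides. With both directions in hand, $S$ and $Q$ are as required.
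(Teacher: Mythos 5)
The paper itself offers no proof of this statement --- it is quoted directly from \cite{Erdos-Rado-1950} --- so your argument has to stand on its own. Its first half does: coloring $2k$-sets by the induced equivalence pattern on $\binom{\{1,\dots,2k\}}{k}$, applying Ramsey's theorem, observing that on the resulting $S$ the relation ``$\chi(X)=\chi(Y)$'' depends only on the order type of the pair $(X,Y)$, defining $Q$ through single-coordinate differences, and proving the ``if'' direction by sliding the non-$Q$ coordinates inside the intervals cut out by the common $Q$-values --- all of this is correct and is the standard opening of the classical proof. The genuine gap is the ``only if'' direction, and you say so yourself: ``the bookkeeping of order patterns there is where essentially all the work resides.'' That bookkeeping \emph{is} the Erd\H os--Rado theorem. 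What must be shown is that a pattern-invariant equivalence relation on $\binom Sk$ that separates every single-coordinate change at a $Q$-position can never identify two sets with distinct $Q$-parts; this requires exhibiting concrete chains of interleaved copies inside $S$ and exploiting transitivity of color-equality to manufacture, from one offending pair, a pair differing in exactly one $Q$-coordinate. No auxiliary configurations are produced and no chaining argument is given, so the hard direction is asserted, not proved.

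Worse, the one concrete reduction you do state is not achievable as described. You claim that, using color-preserving slides of the free coordinates, one may assume $X$ and $Y$ ``agree outside a nonempty set $T\subseteq Q$.'' Take $k=3$ and $Q=\{1,3\}$ with $\select XQ=\{a,c\}$ and $\select YQ=\{a',c'\}$ such that the intervals of $S$ strictly between $a,c$ and strictly between $a',c'$ are disjoint: the middle coordinate of $X$ is forced into the first interval and the middle coordinate of $Y$ into the second, so no sequence of moves that preserves the $Q$-parts can make the non-$Q$ coordinates coincide. So even the skeleton of your ``only if'' argument needs to be reorganized (the classical route chains pairs realizing the \emph{given} order pattern of $(X,Y)$ across overlapping placements in $S$, rather than first normalizing the free coordinates). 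As it stands, the proposal proves the easy implication and leaves the essential one open.
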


\noindent
Its finite version takes the following form.

\begin{THM}[Finite Erd\H os-Rado Canonization Theorem]\label{canrp.thm.FERCT}
  For positive integers $k$ and $m$ there exists an integer $n$ such that for every
  coloring $\chi : \binom n k \to \omega$ there exists a
  set $S \in \binom nm$ and a possibly empty set $Q \subseteq \{1, 2, \ldots, k\}$ such that for all
  $X, Y \in \binom S k$ we have: $\chi(X) = \chi(Y)$ if and only if $\select XQ = \select YQ$.
\end{THM}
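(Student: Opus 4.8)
The plan is to derive Theorem~\ref{canrp.thm.FERCT} from the (infinite) Erd\H os-Rado Canonization Theorem by a compactness argument, in the same spirit as the classical derivation of the Finite Ramsey Theorem from the Ramsey Theorem. The one subtlety is that the colorings here have infinite range, so colorings of $\binom n k$ cannot serve directly as the nodes of a finitely branching tree. This is resolved by the observation that whether a pair $(S,Q)$ witnesses the canonical property for a coloring $\chi$ depends on $\chi$ \emph{only through its kernel} $\ker\chi = \{\chi^{-1}(c) : c \in \im\chi\}$, the partition of $\binom n k$ into color classes --- and a finite set has only finitely many partitions.

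Accordingly, fix $k$ and $m$, identify each positive integer with the set of its predecessors, and suppose towards a contradiction that no $n$ works. Since canonicity depends only on the kernel, the hypothesis yields, for every $j$, a partition $P$ of $\binom j k$ which is \emph{bad}: there is no $S \in \binom j m$ and $Q \subseteq \{1,\ldots,k\}$ such that, for all $X,Y \in \binom S k$, the sets $X$ and $Y$ lie in the same block of $P$ if and only if $\select X Q = \select Y Q$. A routine check (using that $S \subseteq i \le j$ implies $S \in \binom j m$ and $\binom S k \subseteq \binom i k$) shows that the restriction $P \upharpoonright \binom i k$ of a bad partition of $\binom j k$ to $\binom i k$ is again bad whenever $i \le j$. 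Now consider the tree whose level-$j$ nodes are the bad partitions of $\binom j k$, with $P$ made a child of $P \upharpoonright \binom j k$: it is finitely branching and all of its levels are nonempty, so K\"onig's Lemma produces an infinite branch, i.e.\ a coherent family $(P_j)_j$ of bad partitions with $P_{j+1} \upharpoonright \binom j k = P_j$ for all $j$. Its union $P_\omega := \bigcup_j P_j$ is a partition of $\binom \omega k$ into at most countably many blocks, and $P_\omega \upharpoonright \binom n k = P_n$ for every $n$; fix any coloring $\chi_\omega : \binom \omega k \to \omega$ whose kernel is $P_\omega$.

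Finally, apply the Erd\H os-Rado Canonization Theorem to $\chi_\omega$: there are an infinite $S_0 \subseteq \omega$ and a $Q \subseteq \{1,\ldots,k\}$ with $\chi_\omega(X) = \chi_\omega(Y) \iff \select X Q = \select Y Q$ for all $X,Y \in \binom{S_0}{k}$. Choose any $S \in \binom{S_0}{m}$ and an $n$ with $S \subseteq n$. Then for all $X,Y \in \binom S k \subseteq \binom n k$ the sets $X$ and $Y$ lie in the same block of $P_n = P_\omega \upharpoonright \binom n k$ precisely when $\chi_\omega(X) = \chi_\omega(Y)$, hence precisely when $\select X Q = \select Y Q$; so the pair $(S,Q)$ witnesses that $P_n$ is \emph{not} bad, contradicting its choice. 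Therefore some $n$ works. The only genuine obstacle is the step described in the first paragraph: passing from colorings to their kernels is exactly what makes the space of relevant objects compact and turns the problem into the textbook finitely-branching-tree argument; the same reduction can alternatively be run with a nonprincipal ultrafilter on $\omega$ applied to the sequence of bad partitions.
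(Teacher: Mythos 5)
Your proposal is correct. Note that the paper offers no proof of Theorem~\ref{canrp.thm.FERCT} at all: it is quoted as background, being the finite form of the Erd\H os--Rado Canonization Theorem, so there is no in-paper argument to compare against; the compactness derivation from the infinite theorem that you give is one of the standard routes. The one idea that the argument genuinely needs --- that whether a pair $(S,Q)$ canonizes $\chi$ depends on $\chi$ only through its kernel, so that the nodes of the tree can be taken to be the finitely many partitions of $\binom{j}{k}$ rather than the infinitely many $\omega$-colorings --- is exactly the observation you make, and your check that the restriction of a bad partition is again bad (via $S\in\binom{i}{m}\subseteq\binom{j}{m}$ and $\binom{S}{k}\subseteq\binom{i}{k}$) is right, so K\"onig's Lemma applies. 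One cosmetic point: the literal union $\bigcup_j P_j$ of the sets of blocks is not itself a partition of $\binom{\omega}{k}$, since a block of $P_j$ is in general properly contained in the corresponding block at later levels; what you mean is the limit partition induced by the union of the associated equivalence relations (equivalently, whose blocks are the unions along the chains of blocks). For that partition the identity $P_\omega\upharpoonright\binom{n}{k}=P_n$ holds by coherence, $\binom{\omega}{k}$ is countable so a coloring $\chi_\omega:\binom{\omega}{k}\to\omega$ with kernel $P_\omega$ exists, and the final contradiction with the badness of $P_n$ goes through exactly as you wrote it.
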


In 1985 Pr\"omel and Voigt proved the canonical Ramsey theorem for hypergraphs~\cite{Promel-Voigt-Gra-1985}.
Let $\bfr = (r_1, r_2, \ldots, r_k)$ be a finite sequence of positive integers.
A \emph{linearly ordered $\bfr$-hypergraph} is a structure $\calH = (H, E_1, E_2, \ldots, E_k, \Boxed<)$
where $H$ is a set of \emph{vertices} of $\calH$, $<$ is a linear order on $H$
and $E_i \subseteq \binom{H}{r_i}$ is a set of \emph{$r_i$-hyperedges} of $\calH$, $1 \le i \le k$.
A finite linearly ordered $\bfr$-hypergraph $\calH = (H, E_1, E_2, \ldots, E_k, \Boxed<)$
is \emph{irreducible} if for every $a, b \in H$ such that $a \ne b$ there is an $i \in \{1, 2, \ldots, k\}$ and a hyperedge
$e \in E_i$ such that $a, b \in e$.
For a family $\FF$ of irreducible finite linearly ordered $\bfr$-hypergraphs let
$\Forb_{\bfr}(\FF)$ denote the class of all finite linearly ordered $\bfr$-hypergraphs $\calH$ such that
no hypergraph from $\FF$ embeds into $\calH$.

Let $\calG$ and $\calH$ be finite linearly ordered $\bfr$-hypergraphs. By $\binom \calG \calH$ we denote the set of all the
induced linearly ordered subhypergraphs of $\calG$ that are isomorphic to $\calH$.
Now, let $Q \subseteq \{1, 2, \ldots, n(\calH)\}$ be a (possibly empty) set, where $n(\calH)$ is the
number of vertices of $\calH$, and let $\{w_1 < w_2 < \ldots < w_{n(\calH)}\}$ be the set of vertices of~$\calH$
(as a linearly ordered set). By $\select \calH Q$ we denote the subhypergraph of $\calH$ induced by
$\{w_q : q \in Q\}$. (Note that $\select \calH\0$ is the empty linearly ordered hypergraph $(\0, \0, \0)$.)

\begin{THM}[Canonical Ramsey Theorem for Hypergraphs~\cite{Promel-Voigt-Gra-1985}]\label{canrp.thm.HGRA-CRP}
  Let $\bfr$ be a finite sequence of positive integers and let $\FF$ be a family of
  irreducible finite linearly ordered $\bfr$-hypergraphs. Then $\Forb_{\bfr}(\FF)$ has the canonical Ramsey property.

  Explicitly, for any $\calH, \calE \in \Forb_{\bfr}(\FF)$ there exists a
  $\calG \in \Forb_{\bfr}(\FF)$ such that for every coloring $\chi : \binom \calG\calH \to \omega$ there exists an
  $\calE^* \in \binom \calG\calE$ and a (possibly empty) set $Q \subseteq \{1, 2, \ldots, n(\calH)\}$ such that
  for all $\calH', \calH'' \in \binom{\calE^*}{\calH}$ we have:
  $\chi(\calH') = \chi(\calH'')$ if and only if $\select{\calH'}{Q} = \select{\calH''}{Q}$.
\end{THM}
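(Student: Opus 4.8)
The plan is to deduce the canonical Ramsey property from the ordinary Ramsey property of $\Forb_{\bfr}(\FF)$ by an Erd\H os--Rado-style argument, carried out inside the category whose objects are the members of $\Forb_{\bfr}(\FF)$ and whose morphisms are the embeddings. First I would record three structural facts. \emph{(i) $\Forb_{\bfr}(\FF)$ has the Ramsey property.} Since every vertex set is linearly ordered, a linearly ordered $\bfr$-hypergraph is the same datum as a linearly ordered $\Theta$-structure ($\Theta = (R_1, \ldots, R_k)$, $\arity(R_i) = r_i$) in which every tuple of every $R_i$ is strictly increasing; this last requirement is the omission of a finite family of irreducible ``bad-tuple'' structures, and the members of $\FF$ translate to irreducible $\Theta$-structures, so $\Forb_{\bfr}(\FF)$ coincides with a class $\Forb_{\Theta, \Boxed<}(\FF')$ with $\FF'$ irreducible and Theorem~\ref{canrp.thm.NRT} applies (with any finite number of colours, as stated there). \emph{(ii) Every object is rigid}, because an order-preserving self-bijection of a finite linear order is the identity; hence each copy $\calH'$ of $\calH$ in an ambient structure carries a \emph{unique} isomorphism $\calH \to \calH'$ (likewise for $\calB$ below), so colourings of copies transport unambiguously. \emph{(iii) $\Forb_{\bfr}(\FF)$ has free amalgamation:} gluing two hypergraphs along a common induced subhypergraph, adding no hyperedge that meets both sides and choosing any linear order extending both, keeps one in the class, because a forbidden hypergraph, being irreducible, cannot straddle the two sides.

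Next I would finitise the colours. Fix $\calH, \calE$ and put $m := n(\calH)$. Choose an auxiliary $\calB \in \Forb_{\bfr}(\FF)$ that embeds $\calE$ and that, for each $q \in \{1, \ldots, m\}$, contains a ``fork at position~$q$'' --- two copies of $\calH$ that coincide except in their $q$-th vertices --- together with the ``one-step-move'' sub-configurations needed below; such a $\calB$ exists by fact~(iii) (a fork at $q$ is the free amalgam of two copies of $\calH$ over the copy of $\calH$ with its $q$-th vertex removed). Fix an enumeration $\binom \calB \calH = \{\calH_1, \ldots, \calH_t\}$. A colouring $\chi : \binom \calG \calH \to \omega$ then induces a colouring $\widehat\chi$ of $\binom \calG \calB$ whose value on a copy $\calB'$ is the equivalence relation $\{(i,j) : \chi(\calH_i') = \chi(\calH_j')\}$ on $\{1, \ldots, t\}$, where $\calH_i'$ is the image of $\calH_i$ under the unique isomorphism $\calB \to \calB'$. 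There are only finitely many such equivalence relations, so by fact~(i), applied to the embedding of $\calB$ into a sufficiently rich workspace $\calW \in \Forb_{\bfr}(\FF)$ (obtained from $\calE$ and the required forks and one-step moves by repeated free amalgamation), there is $\calG \in \Forb_{\bfr}(\FF)$ containing a copy $\calW_0$ of $\calW$ on which $\widehat\chi$ takes only the value $\rho$. Thus the pattern of $\chi$-coincidences among the copies of $\calH$ inside \emph{every} copy of $\calB$ contained in $\calW_0$ is this one fixed equivalence relation $\rho$.

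Now I would read off $Q$ and canonise on $\calW_0$. Let $Q$ be the set of those $q \in \{1, \ldots, m\}$ for which $\rho$ separates the two copies of $\calH$ in a fork at position~$q$; this is independent of the copy of $\calB$ and of the fork. Write $\calH' \equiv_Q \calH''$ for $\select{\calH'}{Q} = \select{\calH''}{Q}$. \emph{If $\calH' \equiv_Q \calH''$ then $\chi(\calH') = \chi(\calH'')$:} connect $\calH'$ to $\calH''$ by a chain of copies of $\calH$ in $\calW_0$ in which consecutive copies agree off a single position outside~$Q$ --- obtained by moving the off-$Q$ vertices of $\calH'$ onto those of $\calH''$ one at a time, routing through auxiliary vertices of $\calW_0$ as needed, by fact~(iii) and the room in $\calW_0$ --- and note that each consecutive pair, viewed through a copy of $\calB$ that realises it as a fork at the relevant off-$Q$ position, gets a single colour, since that position is not in $Q$ and $\widehat\chi$ takes only the value $\rho$; so $\chi$ is constant along the chain. \emph{If $\calH' \not\equiv_Q \calH''$ then $\chi(\calH') \neq \chi(\calH'')$:} if the colours were equal, then --- after normalising the off-$Q$ parts by the previous case and then traversing a suitable chain of single-$Q$-coordinate changes housed inside one copy of $\calB$, so that $\rho$ controls all its pairwise comparisons at once --- one would contradict the definition of $Q$; organising the chain so that no spurious colour coincidence can arise is done exactly as in the classical Erd\H os--Rado argument. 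Finally, $\calE$ embeds into $\calW_0$, so any $\calE^* \in \binom{\calW_0}{\calE}$ works: it is an induced subhypergraph of $\calG$, hence lies in $\Forb_{\bfr}(\FF)$, and the claim, restricted to $\binom{\calE^*}{\calH}$, is exactly the required conclusion with this set $Q \subseteq \{1, \ldots, n(\calH)\}$.

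I expect the main obstacle to be the bookkeeping concealed in the last step: pinning down precisely which finite sub-configurations $\calB$ and the workspace $\calW$ must contain so that every comparison used to build and traverse the chains --- in both directions --- takes place inside a single copy of $\calB$ whose $\widehat\chi$-value is already known to equal $\rho$, and, in the second direction, so that the chosen ordering of the chain genuinely prevents the colour from returning to a value used earlier. This is exactly the delicate combinatorics performed by hand in the original Pr\"omel--Voigt proof; the benefit of the categorical framing advocated in the paper is to extract the ingredients actually used --- rigidity, the finite-colour Ramsey property, free amalgamation, and the availability of forks and one-step moves --- as a single reusable principle that will also serve the more intricate classes treated in the later sections.
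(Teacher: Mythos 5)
First, a point of orientation: the paper does not prove this theorem at all --- it is quoted verbatim from Pr\"omel--Voigt \cite{Promel-Voigt-Gra-1985} and used as the imported starting point for the paper's transfer arguments, so the only meaningful comparison is with the original proof, which is a substantial self-contained induction, not a reduction of the kind you propose. Your preliminary facts (i)--(iii) are fine: the encoding of linearly ordered $\bfr$-hypergraphs as ordered relational structures does give the finite-colour Ramsey property of $\Forb_{\bfr}(\FF)$ from Theorem~\ref{canrp.thm.NRT}, rigidity follows from the linear orders, and irreducibility of the members of $\FF$ does give free amalgamation. The finitisation step (colouring copies of an auxiliary $\calB$ by the induced coincidence pattern $\rho$ on its $t$ copies of $\calH$, then homogenising) is also a legitimate and standard move. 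The gap is everything after that, and it is not bookkeeping: it is the theorem.

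Concretely, three steps are asserted rather than proved, and each is where the real difficulty of \cite{Promel-Voigt-Gra-1985} lives. (1) Well-definedness of $Q$: different forks at the same position $q$ inside $\calB$ correspond to different pairs $(i,j)$ in $\{1,\ldots,t\}$, and nothing so far forces $\rho$ to give them the same verdict; constancy of $\widehat\chi$ over copies of $\calB$ only transports a verdict between copies realising the \emph{same} pair, so the claimed independence ``of the copy of $\calB$ and of the fork'' needs its own argument. (2) The forward direction: your chains must consist of genuine copies of $\calH$ (a mixed vertex set drawn from $\calH'$, $\calH''$ and auxiliary vertices need not induce a copy of $\calH$, and must avoid $\FF$), must exist inside the \emph{fixed} monochromatic copy of the workspace chosen before $\chi$ is known, and each consecutive pair must lie inside a copy of $\calB$ lying in that workspace; pre-engineering $\calB$ and $\calW$ so that this holds simultaneously for \emph{all} pairs $\calH',\calH''\in\binom{\calE^*}{\calH}$ is precisely the partite-style construction that the original proof carries out and that you defer. (3) Most seriously, the ``only if'' direction: a constant pattern $\rho$ could a priori be one that identifies copies with distinct $Q$-restrictions in some structured way; ruling out all such non-canonical constant patterns is the core of the Erd\H os--Rado-type analysis, it genuinely uses the linear order and carefully chosen overlapping configurations of copies of $\calH$, and your sketch explicitly delegates it to ``exactly as in the classical Erd\H os--Rado argument'' --- but the classical argument is about $k$-subsets of a chain, and its adaptation to $\bfr$-hypergraphs with forbidden irreducible substructures is not automatic (if it were, the canonical hypergraph theorem would be a corollary of Ne\v set\v ril--R\"odl plus Erd\H os--Rado, which it is not known to be in any such soft way). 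As it stands the proposal is a plausible programme, not a proof; to make it one you would have to supply (1)--(3), at which point you would essentially be reconstructing the Pr\"omel--Voigt argument.
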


Linearly ordered $(2)$-hypergraphs are usually referred to as \emph{linearly ordered graphs}, while
linearly ordered $(t)$-hypergraphs for $t \ge 2$ are usually referred to as \emph{linearly ordered $t$-uniform hypergraphs}.
The following is an immediate consequence of Theorem~\ref{canrp.thm.HGRA-CRP}:

\begin{COR}[\cite{Promel-Voigt-Gra-1985}]\label{canrp.cor.HGRA-CRP}
  $(a)$ The class of all finite linearly ordered graphs has the canonical Ramsey property.

  $(b)$ For every $n \ge 3$ the class of all finite linearly ordered $K_n$-free graphs has the canonical Ramsey property.
  (Here, $K_n$ stands for the complete graph on $n$ vertices; a graph is \emph{$K_n$-free} if it does not embed~$K_n$.)

  $(c)$ The class of all finite linearly ordered $t$-uniform hypergraphs, $t \ge 2$, has the canonical Ramsey property.
\end{COR}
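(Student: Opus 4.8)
The plan is to derive each of the three statements by instantiating the parameters $\bfr$ and $\FF$ in Theorem~\ref{canrp.thm.HGRA-CRP}; the only thing to verify is that the chosen family $\FF$ consists of irreducible structures and that the resulting class $\Forb_{\bfr}(\FF)$ is exactly the class in question.

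For $(a)$ I take $\bfr = (2)$, a sequence of length one, and $\FF = \0$. By definition a linearly ordered $(2)$-hypergraph is precisely a finite linearly ordered graph, and since the defining condition of $\Forb_{(2)}(\0)$ is vacuous, $\Forb_{(2)}(\0)$ is the class of all finite linearly ordered graphs. Theorem~\ref{canrp.thm.HGRA-CRP} then applies verbatim. Part $(c)$ is the same argument with $\bfr = (t)$ and $\FF = \0$: $\Forb_{(t)}(\0)$ is by definition the class of all finite linearly ordered $t$-uniform hypergraphs.

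For $(b)$ I again take $\bfr = (2)$, and now let $\calK_n = (\{1, 2, \ldots, n\}, \binom{\{1,\ldots,n\}}{2}, \Boxed<)$ be the linearly ordered complete graph on $n$ vertices (with the natural order), and put $\FF = \{\calK_n\}$. The structure $\calK_n$ is irreducible since every pair of distinct vertices forms a hyperedge, so Theorem~\ref{canrp.thm.HGRA-CRP} applies. It remains to check that $\Forb_{(2)}(\{\calK_n\})$ is exactly the class of finite linearly ordered graphs that are $K_n$-free in the unordered sense. If a linearly ordered graph $\calH$ is not $K_n$-free, it contains $n$ pairwise-adjacent vertices; the induced linearly ordered subgraph on those vertices is complete, hence order-isomorphic to $\calK_n$, so $\calK_n$ embeds into $\calH$. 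Conversely, any embedding of $\calK_n$ into $\calH$ exhibits an $n$-clique. Thus $\calH \in \Forb_{(2)}(\{\calK_n\})$ iff $\calH$ is $K_n$-free, and the claim follows.

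Since all three cases are direct specializations of Theorem~\ref{canrp.thm.HGRA-CRP}, there is no substantial obstacle; the only step deserving a one-line justification is the identification in $(b)$ of $K_n$-freeness with forbidding the single linearly ordered graph $\calK_n$, which relies on the fact that the complete graph has, up to isomorphism of linearly ordered graphs, a unique linear expansion.
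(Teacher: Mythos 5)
Your proposal is correct and matches the paper's intent exactly: the paper states Corollary~\ref{canrp.cor.HGRA-CRP} as an immediate specialization of Theorem~\ref{canrp.thm.HGRA-CRP}, which is precisely what you carry out (with $\bfr=(2)$ or $(t)$ and $\FF=\0$, respectively $\FF=\{\calK_n\}$). Your one explicit verification --- that forbidding the single linearly ordered $\calK_n$ captures $K_n$-freeness because the complete graph has a unique linear expansion up to isomorphism, and that $\calK_n$ is irreducible --- is exactly the detail the paper leaves implicit.
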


Theorem~\ref{canrp.thm.HGRA-CRP} appears to be the first structural canonical Ramsey result. In this paper we build on the results
of~\cite{Promel-Voigt-Gra-1985} to provide several new structural canonical Ramsey results.
In contrast to~\cite{Promel-Voigt-Gra-1985} where the authors prove canonical Ramsey statements using the ``classical''
Ramsey-theoretic approach, in this paper we modify the ideas from~\cite{masul-preadj} and using the appropriate
``transfer techniques'' formulated in the language of category theory
we prove the canonical Ramsey theorem for the class of all finite linearly ordered tournaments, the class of all finite
posets with linear extensions, the class of all finite linearly ordered metric spaces and
the class of all finite linearly ordered oriented graphs. We conclude the paper with the canonical version of the
celebrated Ne\v set\v ril-R\"odl Theorem.

In Section~\ref{canrp.sec.catth} we provide a brief overview of basic category-theoretic notions.
Section~\ref{canrp.sec.CRP-LCT} is devoted to the reinterpretation of the canonical Ramsey property in the
language of category theory. As the motivating example for our categorical techniques we derive
the canonical Ramsey property for the class of all finite linearly ordered tournaments.
In Section~\ref{canrp.sec.ple} we present a technical result which enables us to transfer the canonical Ramsey
property from a category to its hereditary subcategory, and as an immediate consequence prove the canonical Ramsey
property for the class of all finite posets with linear extensions.
In Section~\ref{canrp.sec.metspc} we introduce canonical pre-adjunctions between two categories (see~\cite{masul-preadj}
for the motivation) and use them to prove the canonical Ramsey property for the class of all finite linearly ordered metric spaces,
as well as some standard subclasses thereof. The paper concludes with Section~\ref{canrp.sec.cNRT}
in which we prove the canonical version of the Ne\v set\v ril-R\"odl Theorem (Theorem~\ref{canrp.thm.NRT})
and from it easily derive the canonical Ramsey property for the class of all finite linearly ordered oriented graphs.

\section{Categories and functors}
\label{canrp.sec.catth}

In this section we provide a brief overview of basic elementary category-theoretic notions.
For a detailed account of category theory we refer the reader to~\cite{AHS}.

In order to specify a \emph{category} $\CC$ one has to specify
a class of objects $\Ob(\CC)$, a set of morphisms $\hom_\CC(A, B)$ for all $A, B \in \Ob(\CC)$,
the identity morphism $\id_A$ for all $A \in \Ob(\CC)$, and
the composition of morphisms~$\cdot$~so that
$\id_B \cdot f = f = f \cdot \id_A$ for all $f \in \hom_\CC(A, B)$, and
$(f \cdot g) \cdot h = f \cdot (g \cdot h)$ whenever the compositions are defined.
A morphism $f \in \hom_\CC(B, C)$ is \emph{monic} or \emph{left cancellable} if
$f \cdot g = f \cdot h$ implies $g = h$ for all $g, h \in \hom_\CC(A, B)$ where $A \in \Ob(\CC)$ is arbitrary.

\begin{EX}\label{canrp.ex.CH-def}
  Let $\Theta = (R_i)_{i \in I}$ be a sequence of finitary relational symbols. Any class $\KK$
  of $\Theta$-structures can be thought of as a category whose objects are the objects from $\KK$ and whose
  morphisms are the embeddings. In particular:
  \begin{enumerate}\renewcommand{\labelenumi}{(\theenumi)}
  \item
    A \emph{chain} is a pair $(A, \Boxed<)$ where $<$ is a linear ($=$~total) order on~$A$.
    In case $A$ is finite, instead of $(A, \Boxed<)$ we shall simply write $A = \{a_1 < a_2 < \ldots < a_n\}$.
    We shall also allow chains to be empty. The \emph{empty chain} is, therefore, the structure $(\0, \0)$.
    Finite chains and embeddings constitute a category that we denote by~$\CHemb$.
  \item
    Finite linearly ordered graphs and embeddings constitute a category that we denote by~$\GraEmb$.
    We also allow the empty graph $(\0, \0, \0)$.
  \item
    Let $\bfr = (r_i)_{i \in I}$ be a sequence of positive integers.
    A \emph{linearly ordered $\bfr$-hypergraph} is a structure $\calH = (H, (E_i)_{i \in I}, \Boxed<)$
    where $H$ is a set of \emph{vertices} of $\calH$, $<$ is a linear order on $H$
    and $E_i \subseteq \binom{H}{r_i}$ is a set of \emph{$r_i$-hyperedges} of $\calH$, $i \in I$.
    Finite linearly ordered $\bfr$-hypergraphs and embeddings constitute a category that we denote by~$\HGraEmb(\bfr)$.
    We also allow the empty $\bfr$-hypergraph $(\0, (\0)_{i \in I}, \0)$.
  \item
    A \emph{reflexive digraph with a linear extension} is a structure $(V, \rho, \Boxed<)$ where $<$ is a linear order on $V$
    and $\rho \subseteq V^2$ is a reflexive binary relation such that $(x, y) \in \rho$ and $x \ne y$ implies
    $x < y$ for all $x, y \in V$.
    The \emph{empty reflexive digraph with a linear extension} is the structure $(\0, \0, \0)$.
    Finite reflexive digraphs with linear extensions together with
    embeddings constitute a category that we denote by~$\EDigEmb$.
  \item
    A \emph{linearly ordered tournament} is a structure $(V, E, \Boxed<)$ where $<$ is a linear order on~$V$ and
    $E \subseteq V^2$ is an irreflexive binary relation such that and for all $x, y \in V$ satisfying $x \ne y$
    we have that either $(x, y) \in E$ or $(y, x) \in E$.
    The \emph{empty tournament} is the structure $(\0, \0, \0)$.
    Finite linearly ordered tournaments and embeddings constitute a category that we denote by~$\TourEmb$.
  \item
    An \emph{oriented graph} $\calV = (V, \rho)$ is a set $V$ together with an irreflexive binary relation $\rho$
    on $V$ such that $(v_1, v_2) \in \rho \Rightarrow (v_2, v_1) \notin \rho$ for all $v_1, v_2 \in V$.
    A \emph{linearly ordered oriented graph} is a structure $\calV = (V, \rho, \Boxed{<})$
    where $(V, \rho)$ is an oriented graph and $<$ is a linear order on~$V$.
    The \emph{empty linearly ordered oriented graph} is the structure $(\0, \0, \0)$.
    Finite linearly ordered oriented graphs together with embeddings
    constitute a category which we denote by~$\OOGRAEmb$.
  \item
    A \emph{poset with a linear extension} is a structure $(A, \Boxed\sqsubseteq, \Boxed<)$
    where $<$ is a linear order on $V$ and $\Boxed\sqsubseteq \subseteq A^2$ is a partial order on~$A$
    (that is, a reflexive, antisymmetric and transitive relation) such that $x \sqsubseteq y$ and $x \ne y$ implies
    $x < y$ for all $x, y \in A$.
    The \emph{empty poset with a linear extension} is the structure $(\0, \0, \0)$.
    Finite posets with linear extensions and embeddings constitute a category that we denote by~$\EPosEmb$.
  \item
    A \emph{linearly ordered metric space} is a structure $\calM = (M, d, \Boxed<)$ where $d : M^2 \to \RR$ is a
    \emph{metric} and $<$ is a linear order on $M$. A linearly ordered metric space $(M, d, \Boxed<)$ is \emph{rational} if
    $d : M^2 \to \QQ$. The \emph{empty metric space} is the structure $(\0, \0, \0)$.
    Finite linearly ordered metric spaces and isometric embeddings constitute a category that we denote by~$\MetIso$.
  \item
    For a sequence $\Theta = (R_i)_{i \in I}$ of finitary relational symbols and a binary relational symbol $<$ not in
    $\Theta$ let $\REL(\Theta, \Boxed<)$ denote the category whose objects are
    all the finite linearly ordered $\Theta$-relational structures and whose morphisms are embeddings.
    We also allow the empty linearly ordered $\Theta$-relational structure $(\0, (\0)_{i \in I}, \0)$.
  \end{enumerate}
\end{EX}

A category $\DD$ is a \emph{subcategory} of a category $\CC$ if $\Ob(\DD) \subseteq \Ob(\CC)$ and
$\hom_\DD(A, B) \subseteq \hom_\CC(A, B)$ for all $A, B \in \Ob(\DD)$.
A category $\DD$ is a \emph{full subcategory} of a category $\CC$ if $\Ob(\DD) \subseteq \Ob(\CC)$ and
$\hom_\DD(A, B) = \hom_\CC(A, B)$ for all $A, B \in \Ob(\DD)$.
A category $\DD$ is a \emph{hereditary subcategory} of a category $\CC$ if $\DD$ is a full subcategory
of $\CC$ and for all $D \in \Ob(\DD)$ and all $C \in \Ob(\CC)$, if $\hom_\CC(C, D) \ne \0$ then $C \in \Ob(\DD)$.

\begin{EX}
  For a linearly ordered metric space $\calM = (M, d, \Boxed<)$ let
  $$
    \spec(\calM) = \{d(x, y) : x, y \in M\}
  $$
  denote the \emph{spectre} of $\calM$, that is, the set of all the distances that are attained by points in~$\calM$.
  For a nonempty finite $S \subseteq \RR$ of nonnegative reals let $\MetIso(S)$ denote the full subcategory of $\MetIso$
  spanned by all those $\calM \in \Ob(\MetIso)$ satisfying $\spec(\calM) \subseteq S$.
\end{EX}

A \emph{functor} $F : \CC \to \DD$ from a category $\CC$ to a category $\DD$ maps $\Ob(\CC)$ to
$\Ob(\DD)$ and maps morphisms of $\CC$ to morphisms of $\DD$ so that
$F(f) \in \hom_\DD(F(A), F(B))$ whenever $f \in \hom_\CC(A, B)$, $F(f \cdot g) = F(f) \cdot F(g)$ whenever
$f \cdot g$ is defined, and $F(\id_A) = \id_{F(A)}$.

Categories $\CC$ and $\DD$ are \emph{isomorphic} if there exist functors $F : \CC \to \DD$ and $G : \DD \to \CC$ which are
inverses of one another both on objects and on morphisms.

An \emph{oriented multigraph} $\Delta$ consists of a collection (possibly a class) of vertices $\Ob(\Delta)$,
a collection of arrows $\Arr(\Delta)$, and two maps $\dom, \cod : \Arr(\Delta) \to \Ob(\Delta)$ which
assign to each arrow $f \in \Arr(\Delta)$ its domain $\dom(f)$ and its codomain $\cod(f)$.
If $\dom(f) = \gamma$ and $\cod(f) = \delta$ we write briefly $f : \gamma \to \delta$.
Intuitively, an oriented multigraph is a ``category without composition''. Therefore,
each category $\CC$ can be understood as an oriented multigraph
whose vertices are the objects of the category and whose arrows are the morphisms of the category.
A \emph{multigraph homomorphism} between oriented multigraphs $\Gamma$ and $\Delta$
is a pair of maps (which we denote by the same symbol) $F : \Ob(\Gamma) \to \Ob(\Delta)$ and
$F : \Arr(\Gamma) \to \Arr(\Delta)$ such that if $f : \sigma \to \tau$ in $\Gamma$, then
$F(f) : F(\sigma) \to F(\tau)$ in $\Delta$.

Let $\CC$ be a category. For any oriented multigraph $\Delta$, a \emph{diagram in $\CC$ of shape $\Delta$}
is a multigraph homomorphism $F : \Delta \to \CC$. Intuitively, a diagram in $\CC$ is an
arrangement of objects and morphisms in $\CC$ that has the shape of~$\Delta$.
A diagram $F : \Delta \to \CC$ is \emph{commutative} if morphisms along every two paths between the same
nodes compose to give the same morphism.

A diagram $F : \Delta \to \CC$ is \emph{has a commutative cocone in $\CC$} if there exists a $C \in \Ob(\CC)$
and a family of morphisms $(e_\delta : F(\delta) \to C)_{\delta \in \Ob(\Delta)}$ such that for every
arrow $g : \delta \to \gamma$ in $\Arr(\Delta)$ we have $e_\gamma \cdot F(g) = e_\delta$:
$$
  \xymatrix{
     & C & \\
    F(\delta) \ar[ur]^{e_\delta} \ar[rr]_{F(g)} & & F(\gamma) \ar[ul]_{e_\gamma}
  }
$$
(see Fig.~\ref{nrt.fig.3} for an illustration).
We say that $C$ together with the family of morphisms
$(e_\delta)_{\delta \in \Ob(\Delta)}$ is a \emph{commutative cocone in $\CC$ over the diagram~$F$}.

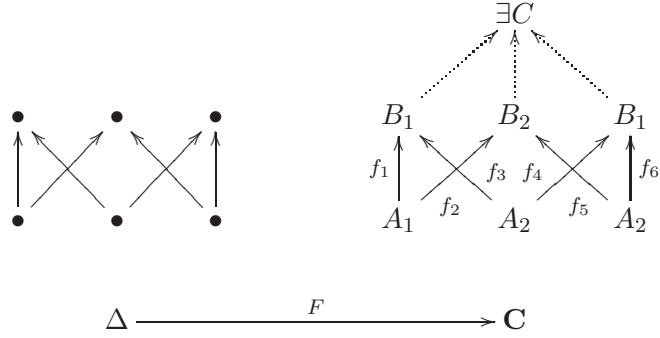
\begin{figure}
  $$
  \xymatrix{
    & & & & & \exists C &
  \\
    \bullet & \bullet & \bullet
    & & B_1 \ar@{.>}[ur] & B_2 \ar@{.>}[u] & B_1 \ar@{.>}[ul]
  \\
    \bullet \ar[u] \ar[ur] & \bullet \ar[ur] \ar[ul] & \bullet \ar[ul] \ar[u]
    & & A_1 \ar[u]^{f_1} \ar[ur]_(0.3){f_2} & A_2 \ar[ur]^(0.3){f_4} \ar[ul]_(0.3){f_3} & A_2 \ar[ul]^(0.3){f_5} \ar[u]_{f_6}
  \\
    & \Delta \ar[rrrr]^F  & & & & \CC  
  }
  $$
  \caption{A diagram in $\CC$ (of shape $\Delta$) with a commutative cocone}
  \label{nrt.fig.3}
\end{figure}

\section{The canonical Ramsey property in the language of category theory}
\label{canrp.sec.CRP-LCT}

For a set $\calS$ we say that
$
  \calS = \calX_0 \union \calX_1 \union \ldots \union \calX_k \union \ldots
$
is an \emph{$\omega$-coloring} of $\calS$ if $\calX_i \sec \calX_j = \0$ whenever $i \ne j$.
Equivalently, an $\omega$-coloring of $\calS$ is any mapping $\chi : \calS \to \omega$.
The relationship between the two notions is obvious and we shall use both.

\begin{DEF}
  For $A, B, C \in \Ob(\CC)$ we write
  $
    C \canlongrightarrow (B)^{A}
  $
  to denote that for every $\omega$-coloring
  $
    \chi : \hom_\CC(A, C) \to \omega
  $
  there is a morphism $w \in \hom_\CC(B, C)$, an object $Q \in \Ob(\CC)$ and a morphism
  $q \in \hom_\CC(Q, A)$ such that, for all $f, g \in \hom_\CC(A, B)$ we have:
  $\chi(w \cdot f) = \chi(w \cdot g)$ if and only if $f \cdot q = g \cdot q$.
  $$
    \XYMATRIX{
      Q \ar[r]^q & A \ar@/^3mm/[r]^f \ar@/_3mm/[r]_g & B \ar[r]^w & C
    }
  $$
  A category $\CC$ has the \emph{canonical Ramsey property} if
  for all $A, B \in \Ob(\CC)$ such that $\hom_\CC(A, B) \ne \0$
  there is a $C \in \Ob(\CC)$ such that $C \canlongrightarrow (B)^{A}$.
\end{DEF}

\begin{EX}\label{canrp.ex.CRP-ch}
  The category $\CHemb$ of finite chains and embeddings (Example~\ref{canrp.ex.CH-def}) has the canonical
  Ramsey property. This is just a reformulation of the Finite Erd\H os-Rado Canonization Theorem
  (Theorem~\ref{canrp.thm.FERCT}). To see that this is indeed the case, it suffices to note that
  $\select XQ$ corresponds to the image of the embedding $q \mapsto x_q$
  of the finite chain $(Q, \Boxed<)$ into the finite chain $(X, \Boxed<)$.
\end{EX}

\begin{EX}\label{canrp.ex.CRP-gra}
  The category $\GraEmb$ of finite linearly ordered graphs and embeddings
  has the canonical Ramsey property. This is just a reformulation of Corollary~\ref{canrp.cor.HGRA-CRP}~$(a)$.
\end{EX}

Let $\CC$ be a category and let $\FF \subseteq \Ob(\CC)$ be a class of objects in~$\CC$.
By $\Forb_\CC(\FF)$ we denote the full subcategory of $\CC$ spanned by the class of all those
$A \in \Ob(\CC)$ satisfying $\hom_\CC(F, A) = \0$ for all $F \in \FF$. The class
$\FF$ is then referred to as a class of \emph{forbidden subobjects}.

\begin{EX}\label{canrp.ex.CRP-hgra}
  Let $\bfr = (r_1, r_2, \ldots, r_n)$ be a finite sequence of positive integers.
  The category $\HGraEmb(\bfr)$ of finite linearly ordered $\bfr$-hypergraphs and embeddings
  has the canonical Ramsey property. Moreover, if $\FF$ is a family of irreducible
  finite linearly ordered $\bfr$-hypergraphs then the category $\Forb_{\HGraEmb(\bfr)}(\FF)$
  has the canonical Ramsey property. This is just a reformulation of Theorem~\ref{canrp.thm.HGRA-CRP}.
\end{EX}

Clearly, if $\CC$ and $\DD$ are isomorphic categories, then one of them has the canonical Ramsey property if and only if
the other one does. This is the easiest way to transfer the the canonical Ramsey property from one
category to the other.

\begin{PROP}\label{canrp.prop.edig-otur-CRP}
  $(a)$ The category $\EDigEmb$ has the canonical Ramsey property.

  $(b)$ The category $\TourEmb$ has the canonical Ramsey property.
\end{PROP}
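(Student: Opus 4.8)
The plan is to reduce both statements to the already-established canonical Ramsey property of $\HGraEmb(\bfr)$ (Example~\ref{canrp.ex.CRP-hgra}) by exhibiting isomorphisms of categories, using the remark immediately preceding the proposition that isomorphic categories share the canonical Ramsey property.

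For part~$(a)$, I would show that $\EDigEmb$ is isomorphic to $\GraEmb$, the category of finite linearly ordered graphs. The point is that in a reflexive digraph with a linear extension $(V, \rho, \Boxed<)$, the relation $\rho$ carries no information beyond the underlying set of non-loop pairs $\{x,y\}$ with $(x,y)\in\rho$: reflexivity forces all loops to be present, and the linear-extension condition forces each present arc to point from the $<$-smaller to the $<$-larger vertex, so $\rho$ is completely recovered from its symmetrization together with~$<$. Concretely, define $F : \EDigEmb \to \GraEmb$ by $F(V,\rho,\Boxed<) = (V, E, \Boxed<)$ where $E = \{\{x,y\} : x \ne y, (x,y)\in\rho\}$, and $F$ is the identity on underlying maps; define $G : \GraEmb \to \EDigEmb$ by $G(V,E,\Boxed<) = (V,\rho,\Boxed<)$ with $\rho = \{(x,x):x\in V\}\cup\{(x,y): \{x,y\}\in E, x<y\}$. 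One checks that a map is an embedding in one category iff it is an embedding in the other (this is where reflexivity and the linear-extension condition are used: an injection preserving and reflecting $<$ preserves and reflects $\rho$ iff it preserves and reflects the symmetrized edge set), and that $F$ and $G$ are mutually inverse on objects and morphisms. Then $\GraEmb$ has the canonical Ramsey property by Example~\ref{canrp.ex.CRP-gra}, hence so does $\EDigEmb$.

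For part~$(b)$, I would argue similarly that $\TourEmb$ is isomorphic to $\GraEmb$. In a linearly ordered tournament $(V, E, \Boxed<)$ the relation $E$ is irreflexive and for each pair $x\ne y$ exactly one of $(x,y),(y,x)$ lies in $E$; the extra data over the linear order is precisely, for each pair $\{x,y\}$, whether the arc agrees with $<$ or is reversed. So again encode this by a graph: let $F(V,E,\Boxed<) = (V, E', \Boxed<)$ where $E' = \{\{x,y\}: x<y \text{ and } (y,x)\in E\}$ (i.e.\ the set of ``back-arcs''), and the inverse $G$ reconstructs $E$ from $E'$ and $<$ by setting $(x,y)\in E$ for $x<y$ iff $\{x,y\}\notin E'$, and $(y,x)\in E$ for $x<y$ iff $\{x,y\}\in E'$. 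Verifying that embeddings correspond is straightforward since embeddings preserve $<$, so the partition of each pair into ``agrees with $<$'' vs.\ ``reversed'' is preserved and reflected exactly when the edge set $E'$ is. By Example~\ref{canrp.ex.CRP-gra} again, $\TourEmb$ inherits the canonical Ramsey property.

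The only genuinely delicate point — and the step I would write out most carefully — is the claim that a bijection between underlying ordered sets is an embedding of digraphs/tournaments if and only if the associated map is an embedding of the encoding graph; one must be slightly careful that ``embedding'' means an injective map that is an isomorphism onto an induced substructure, i.e.\ it must \emph{reflect} the relations as well as preserve them. Given the rigidity forced by the linear extension (part~$(a)$) or by the tournament condition together with $<$ (part~$(b)$), this equivalence holds, but it deserves an explicit sentence. Everything else is a routine check that $F$ and $G$ are functors and mutual inverses, so the bulk of the proof is bookkeeping.
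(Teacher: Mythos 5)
Your proposal is correct and follows essentially the same route as the paper: both parts are proved by exhibiting an isomorphism of categories with $\GraEmb$ and invoking Example~\ref{canrp.ex.CRP-gra}. The only (immaterial) difference is in part~$(b)$, where you encode a tournament by its back-arcs while the paper uses the arcs agreeing with $<$; these encodings differ by complementing the graph and are equally valid.
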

\begin{proof}
  $(a)$ Let us show that $\EDigEmb$ has the canonical Ramsey property by showing that
  the categories $\EDigEmb$ and $\GraEmb$ are isomorphic. The claim then follows from Example~\ref{canrp.ex.CRP-gra}.
  Consider the functors $F : \GraEmb \to \EDigEmb$ and $G : \EDigEmb \to \GraEmb$ defined by
  \begin{align*}
    F(V, E, \Boxed<) &= (V, \rho_E, \Boxed<), \; F(f) = f, \text{ and}\\
    G(V, \rho, \Boxed<) &= (V, E_\rho, \Boxed<), \; G(f) = f,
  \end{align*}
  where
  \begin{align*}
    \rho_E &= \{(x, y) : \{x, y\} \in E \text{ and } x < y\} \union \{(x, x) : x \in V \}, \text{ and}\\
    E_\rho &= \{\{x, y\} : (x, y) \in \rho \text{ and } x \ne y\}.
  \end{align*}
  It is easy to see that $F$ and $G$ are well defined and that they are mutually inverse. Therefore,
  the categories $\EDigEmb$ and $\GraEmb$ are isomorphic.

  $(b)$ Let us show that $\TourEmb$ has the canonical Ramsey property by showing that this category is also
  isomorphic to $\GraEmb$. Consider the functors $F : \GraEmb \to \TourEmb$ and $G : \TourEmb \to \GraEmb$ defined by
  \begin{align*}
    F(V, E, \Boxed<) &= (V, E', \Boxed<), \; F(f) = f, \text{ and}\\
    G(V, T, \Boxed<) &= (V, T', \Boxed<), \; G(f) = f,
  \end{align*}
  where
  \begin{align*}
    E' &= \{(x, y) : \{x, y\} \in E \text{ and } x < y\} \union \{(x, y) : \{x, y\} \notin E \text{ and } x > y\}, \text{ and}\\
    T' &= \{\{x, y\} : (x, y) \in T \text{ and } x < y\}.
  \end{align*}
  It is easy to see that $F$ and $G$ are well defined and that they are mutually inverse. Therefore,
  the categories $\TourEmb$ and $\GraEmb$ are isomorphic.
\end{proof}

\section{Posets with linear extensions}
\label{canrp.sec.ple}

Another way of transferring the Ramsey property is from a category to its subcategory.
We shall now present a technical result which enables us to transfer the canonical Ramsey
property from a category to its hereditary subcategory.

Consider a finite, acyclic, bipartite digraph where all the arrows go from one class of vertices into the other
and the out-degree of all the vertices in the first class is~2:
$$
  \xymatrix{
    \bullet & \bullet & \bullet & \ldots & \bullet \\
    \bullet \ar[u] \ar[ur] & \bullet \ar[ur] \ar[ul] & \bullet \ar[u] \ar[ur] & \ldots & \bullet \ar[u] \ar[ull]
  }
$$
\noindent
Such a digraph will be referred to as a \emph{binary digraph}.
A \emph{binary diagram} in a category $\CC$ is a diagram $F : \Delta \to \CC$ where $\Delta$ is a binary digraph,
$F$ takes the bottom row of $\Delta$ onto the same object, and takes the top row of $\Delta$ onto
the same object, Fig.~\ref{nrt.fig.2}.
A subcategory $\DD$ of a category $\CC$ is \emph{closed for binary diagrams} if every binary diagram
$F : \Delta \to \DD$ which has a commuting cocone in $\CC$ has a commuting cocone in~$\DD$.

\begin{figure}
  $$
  \xymatrix{
    \bullet & \bullet & \bullet
    & & \calB & \calB & \calB
  \\
    \bullet \ar[u] \ar[ur] & \bullet \ar[ur] \ar[ul] & \bullet \ar[ul] \ar[u]
    & & \calA \ar[u]^{f_1} \ar[ur]_(0.3){f_2} & \calA \ar[ur]^(0.3){f_4} \ar[ul]_(0.3){f_3} & \calA \ar[ul]^(0.3){f_5} \ar[u]_{f_6}
  \\
    & \Delta \ar[rrrr]^F  & & & & \CC  
  }
  $$
  \caption{A binary diagram in $\CC$ (of shape $\Delta$)}
  \label{nrt.fig.2}
\end{figure}
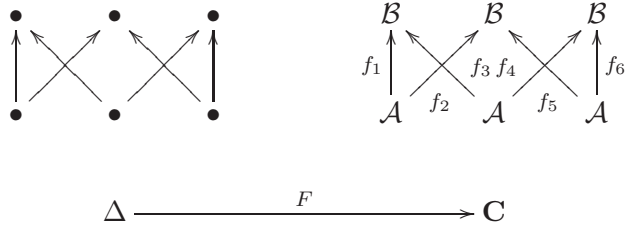

\begin{THM}\label{canrp.thm.1}
  Let $\CC$ be a category such that every morphism in $\CC$ is monic and
  such that $\hom_\CC(\calA, \calB)$ is finite for all $\calA, \calB \in \Ob(\CC)$, and let $\DD$ be a hereditary
  subcategory of~$\CC$. If $\CC$ has the canonical Ramsey property and $\DD$ is closed for binary diagrams,
  then $\DD$ has the canonical Ramsey property.
\end{THM}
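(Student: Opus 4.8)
The plan is to run the canonical Ramsey machine for $\CC$ and then correct the objects it produces so that they land inside $\DD$, using the hypothesis that $\DD$ is hereditary and closed for binary diagrams. Fix $\calA, \calB \in \Ob(\DD)$ with $\hom_\DD(\calA, \calB) \ne \0$; since $\DD$ is full in $\CC$ this is the same as $\hom_\CC(\calA, \calB) \ne \0$. By the canonical Ramsey property of $\CC$ there is a $\calC \in \Ob(\CC)$ with $\calC \canlongrightarrow (\calB)^{\calA}$ in $\CC$. The object $\calC$ need not lie in $\DD$, so the goal is to find a witness $\calD \in \Ob(\DD)$ with $\calD \canlongrightarrow (\calB)^{\calA}$ in $\DD$.

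First I would reduce everything to a finite combinatorial situation. Since $\hom_\CC(\calA, \calB)$ is finite, the set of pairs $(f,g)$ with $f, g \in \hom_\CC(\calA, \calB)$ is finite; for each such pair, consider all objects $Q$ and morphisms $q \in \hom_\CC(Q, \calA)$ that could serve as the ``canonical pattern''—up to the obvious equivalence, $q$ is determined by its image, of which there are only finitely many (again because $\hom$-sets are finite and every morphism is monic, so $Q$ embeds into $\calA$ and there are finitely many subobjects of $\calA$). Thus the data $(w, Q, q)$ produced by $\calC \canlongrightarrow (\calB)^{\calA}$ ranges, after quotienting, over a finite set. The key point is that the set of copies of $\calB$ inside $\calC$, i.e.\ $\hom_\CC(\calB, \calC)$, together with the restrictions along the finitely many morphisms $\calA \to \calB$, assembles into a binary diagram: the bottom object is $\calA$, the top object is $\calB$, and the arrows are precisely the elements of $\hom_\CC(\calA,\calB)$ (in the figure, $f_1,\dots,f_6$). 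This binary diagram has a commuting cocone in $\CC$—namely take $\calC$ itself, with the cocone legs $e_{\calB} = w$ (a copy of $\calB$) and $e_{\calA} = w \cdot f$ for the corresponding $f$; more precisely, for any fixed $w \in \hom_\CC(\calB,\calC)$ the family $(w \cdot f)_f$ is a commuting cocone over the sub-diagram picked out by the copies of $\calA$ sitting under $w$. Because $\DD$ is closed for binary diagrams, this cocone can be ``pulled back'' into $\DD$: there is a $\calD \in \Ob(\DD)$ and morphisms $e'_{\calB} : \calB \to \calD$, $e'_{\calA} : \calA \to \calD$ making the analogous cocone commute in $\DD$.

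Next I would verify that this $\calD$ (or, if one $\calD$ per binary diagram is not enough, an object amalgamating the finitely many $\calD$'s produced as the diagram shape varies—finiteness of all $\hom$-sets makes this a finite amalgamation problem, and heredity plus closure for binary diagrams should again supply it) actually satisfies $\calD \canlongrightarrow (\calB)^{\calA}$ in $\DD$. Given an $\omega$-coloring $\chi : \hom_\DD(\calA, \calD) \to \omega$, I need a copy $w'$ of $\calB$ in $\calD$ and a canonical pattern $(Q, q)$ with $Q \in \Ob(\DD)$. The natural route: transport $\chi$ along a morphism $\calD \to \calC$ (coming from the cocone structure, since $\calD$ arose as a cocone vertex over a diagram that also has $\calC$ as a cocone vertex—here I would use that the cocone in $\CC$ over the ``full'' binary diagram factors through $\calD$, or conversely build $\calC$-copies out of $\calD$-copies), apply $\calC \canlongrightarrow (\calB)^{\calA}$ to get $(w, Q, q)$ in $\CC$, and then observe that $w$ and $q$ land in $\DD$: $w \in \hom_\CC(\calB, \calC)$ composed appropriately with a $\DD$-morphism is a $\DD$-morphism because $\DD$ is full and $\calB \in \Ob(\DD)$, and $Q \in \Ob(\DD)$ because $Q$ admits a morphism into $\calA \in \Ob(\DD)$ and $\DD$ is hereditary. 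The monic hypothesis is what guarantees the colorings and the biconditional ``$\chi(w\cdot f) = \chi(w\cdot g)$ iff $f\cdot q = g\cdot q$'' transfer cleanly between the two categories without collapsing distinct copies.

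The main obstacle I anticipate is the bookkeeping in the middle step: arranging the binary diagram(s) so that a single cocone vertex $\calD$ in $\DD$ simultaneously supports \emph{all} the copies of $\calB$ and the requisite copies of $\calA$ that the $\CC$-level canonization might select, rather than just one preassigned copy $w$. This is where one must be careful to take a large enough binary diagram—indexed by the (finite) set of all copies of $\calA$ in $\calC$ and all morphisms between them—or to iterate/amalgamate finitely many applications of ``closed for binary diagrams,'' and to check that closure for binary diagrams (as stated, for diagrams with the two-row shape and constant values on each row) is genuinely strong enough for that. I expect that the monic assumption and finiteness of $\hom$-sets reduce this to a finite diagram chase, but writing it so that every leg of every cocone is simultaneously a legitimate $\DD$-morphism and the color-equivalence is preserved in both directions is the delicate part.
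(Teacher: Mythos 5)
Your overall skeleton is the same as the paper's (apply the canonical Ramsey property in $\CC$ to get $\calC$, build a binary diagram, use closure under binary diagrams to get a cocone vertex $\calD\in\Ob(\DD)$, and use heredity to place the pattern object $Q$ and the morphism $q$ in $\DD$), and your heredity step at the end is correct. But the central mechanism is missing, and the route you sketch for it would fail. First, the binary diagram is not ``one copy of $\calB$ with the morphisms $\calA\to\calB$ as arrows'': in the paper the top row has one copy of $\calB$ for \emph{each} $e_i\in\hom_\CC(\calB,\calC)=\{e_1,\dots,e_n\}$ (finite by hypothesis), and the bottom row has one copy of $\calA$ for each \emph{coincidence} $e_i\cdot u=e_j\cdot v$ with $i\neq j$ and $u,v\in\hom_\DD(\calA,\calB)$, with arrows labelled $u$ and $v$ into the $i$th and $j$th copies of $\calB$. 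The sole purpose of the diagram is to record which pairs of copies of $\calB$ overlap in a copy of $\calA$ inside $\calC$; a diagram built over a single fixed $w$, as in your cocone description, records nothing and gives no leverage. Your own closing paragraph flags exactly this (``a single cocone vertex supporting all copies of $\calB$''), but the fixes you propose --- amalgamating several $\calD$'s, or iterating closure --- are not available: the hypotheses give no amalgamation in $\DD$, only closure under the two-row diagrams, and the single diagram above already does the whole job.

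Second, the direction and mechanism of the coloring transfer are wrong in your sketch. You propose to ``transport $\chi$ along a morphism $\calD\to\calC$''; no morphism exists in either direction between $\calC$ and the cocone vertex $\calD$ --- closure under binary diagrams only asserts the existence of \emph{some} commuting cocone $(f_i:\calB\to\calD)_{i\le n}$ in $\DD$, with no comparison map to the cocone $(e_i)$ in $\CC$. The paper instead starts from a coloring $\chi$ of $\hom_\DD(\calA,\calD)$ and defines $\chi'$ on $\hom_\CC(\calA,\calC)$ by setting $\chi'(e_s\cdot u)=\chi(f_s\cdot u)+1$ for $u\in\hom_\DD(\calA,\calB)$ and giving every morphism not of this form the colour $0$. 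Well-definedness is exactly where the two hypotheses bite: if $e_s\cdot u=e_t\cdot v$ with $s=t$ then $u=v$ because morphisms are monic, and if $s\neq t$ then $(u,v,s,t)$ is a vertex of the binary diagram, so the cocone in $\DD$ forces $f_s\cdot u=f_t\cdot v$; either way the prescribed colours agree. Applying $\calC\canlongrightarrow(\calB)^{\calA}$ to $\chi'$ yields some $e_\ell$, $Q$, $q$, and the corresponding $f_\ell\in\hom_\DD(\calB,\calD)$ witnesses $\calD\canlongrightarrow(\calB)^{\calA}$ because $\chi(f_\ell\cdot u)=\chi'(e_\ell\cdot u)-1$ on $\hom_\DD(\calA,\calB)$; heredity then puts $Q$ and $q$ in $\DD$, as you said. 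Your preliminary reduction (``finitely many patterns $(w,Q,q)$ up to equivalence'') is neither needed nor clearly justified in an abstract category. Without the coincidence-recording diagram and the colour pullback $e_s\cdot u\mapsto f_s\cdot u$, the argument does not go through, so as it stands the proposal has a genuine gap at its core step.
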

\begin{proof}
  Take any $A, B \in \Ob(\DD)$ such that $\hom_\DD(A, B) \ne \0$. Since $\DD$ is a subcategory of $\CC$
  and $\CC$ has the canonical Ramsey property, there is a $C \in \Ob(\CC)$ such that
  $C \canlongrightarrow (B)^A$.

  Let us now construct a binary diagram in $\DD$ as follows. Let $\hom_\CC(B, C) = \{e_1, e_2, \ldots, e_n\}$.
  Intuitively, for each $e_i \in \hom_\CC(B, C)$ we add a copy of $B$ to the diagram, and whenever $e_i \cdot u = e_j \cdot v$
  for some $u, v \in \hom_\DD(A, B)$ we add a copy of $A$ to the diagram together with two arrows:
  one going into the $i$th copy of $B$ labelled by $u$ and another one going into the $j$th copy of $B$ labelled by~$v$
  (note that, by the construction, this diagram has a commuting cocone in $\CC$):
  $$
  \xymatrix{
    & & C
  \\
    B \ar[urr]^{e_1} & B \ar[ur]_(0.6){e_i} & \ldots & B \ar[ul]^(0.6){e_j} & B \ar[ull]_{e_n}
  \\
    A \ar[u] \ar[ur] & A \ar[urr]_(0.3){v} \ar[u]_{u} & \ldots & A \ar[ur] \ar[ul] & \DD
    \save "2,1"."3,5"*[F]\frm{} \restore
  }
  $$
  Formally, let $\Delta$ be the binary digraph whose objects are
  \begin{align*}
    \Ob(\Delta) = \{1, 2, \ldots, n\} \union \{(u, v, i, j) : \; & 1 \le i,j \le n; \; i \ne j;\\
                  &u, v \in \hom_\DD(\calA, \calB); \; e_i \cdot u = e_j \cdot v\}
  \end{align*}
  and whose arrows are of the form $u : (u, v, i, j) \to i$ and $v : (u, v, i, j) \to j$.
  Let $F : \Delta \to \DD$ be the following diagram whose action on objects is:
  \begin{align*}
    F(i) &= B, && 1 \le i \le n,\\
    F((u, v, i, j)) &= A, && e_i \cdot u = e_j \cdot v,
  \end{align*}
  and whose action on morphisms is $F(g) = g$:
  $$
  \xymatrix{
    i &  & j
    & & B &  & B
  \\
      & (u, v, i, j) \ar[ur]_v \ar[ul]^u &  
    & &   & A \ar[ur]_v \ar[ul]^u & 
  \\
    & \Delta \ar[rrrr]^F  & & & & \CC  
  }
  $$

  As we have already observed in the informal discussion above, the diagram $F : \Delta \to \DD$ has a commuting cocone in $\CC$,
  so, by the assumption, it has a commuting cocone in~$\DD$. Therefore, there is
  a $D \in \Ob(\DD)$ and morphisms $f_i : B \to D$, $1 \le i \le n$, such that the following diagram in $\DD$ commutes:
  $$
  \xymatrix{
    & & D & & \DD
  \\
    B \ar[urr]^{f_1} & B \ar[ur]_(0.6){f_i} & \ldots & B \ar[ul]^(0.6){f_j} & B \ar[ull]_{f_n}
  \\
    A \ar[u] \ar[ur] & A \ar[urr]_(0.3){v} \ar[u]_{u} & \ldots & A \ar[ur] \ar[ul] & 
  }
  $$
  Let us show that in $\DD$ we have $D \canlongrightarrow (B)^A$. Take any $\omega$-coloring
  $$
    \hom_\DD(A, D) = \calX_0 \union \calX_1 \union \ldots \union \calX_k \union \ldots
  $$
  and define an $\omega$-coloring
  $$
    \hom_\CC(A, C) = \calX'_0 \union \calX'_1 \union \ldots \union \calX'_k \union \ldots
  $$
  as follows. For $j \in \omega$ let
  $$
    \calX'_{j+1} = \{e_s \cdot u : 1 \le s \le n, u \in \hom_\DD(A, B), f_s \cdot u \in \calX_j \},
  $$
  and then let
  $$
    \calX'_0 = \hom_\CC(A, C) \setminus \UNION_{j \in \omega} \calX'_{j+1}.
  $$
  Let us show that $\calX'_i \sec \calX'_j = \0$ whenever $i \ne j$. By the definition of $\calX'_0$
  it suffices to consider the case where $i \ge 1$ and $j \ge 1$. 
  Assume, to the contrary, that there is an $h \in \calX'_{i+1} \sec \calX'_{j+1}$ for some $i, j \in \omega$
  such that~$i \ne j$.
  Then $h = e_s \cdot u$ for some $s$ and some $u \in \hom_\DD(A, D)$ such that $f_s \cdot u \in \calX_i$, and
  $h = e_t \cdot v$ for some $t$ and some $v \in \hom_\DD(A, D)$ such that $f_t \cdot v \in \calX_j$.
  Then $e_s \cdot u = h = e_t \cdot v$. Clearly, $s \ne t$ and we have that $(u, v, s, t) \in \Ob(\Delta)$.
  (Suppose, to the contrary, that $s = t$. Then
  $e_s \cdot u = e_s \cdot v$ implies $u = v$ because all the morphisms in $\CC$ are monic.
  But then $\calX_i \ni f_s \cdot u = f_s \cdot v = f_t \cdot v \in \calX_j$, which contradicts the assumption that
  $\calX_i \sec \calX_j = \0$.) Consequently,
  $f_s \cdot u = f_t \cdot v$ because $D$ and the morphisms $f_i : B \to D$, $1 \le i \le n$, form
  a commuting cocone over $F : \Delta \to \DD$ in~$\DD$. Therefore, $f_s \cdot u = f_t \cdot v \in \calX_i \sec \calX_j$,
  which is not possible.

  Let $\chi : \hom_\DD(A, D) \to \omega$ be the coloring such that $\chi(\calX_i) = i$ for all $i \in \omega$, and let
  $\chi' : \hom_\CC(A, C) \to \omega$ be the coloring such that $\chi'(\calX'_i) = i$ for all $i \in \omega$.
  Since, by construction, $C \canlongrightarrow (B)^A$, there is an $e_\ell \in \hom_\CC(B, C)$, an object
  $Q \in \Ob(\CC)$ and a morphism $q \in \hom_\CC(Q, A)$ such that
  $$
    \chi'(e_\ell \cdot u) = \chi'(e_\ell \cdot v) \text{ if and only if } u \cdot q = v \cdot q,
  $$
  for all $u, v \in \hom_\CC(A, B)$.
  By the assumption, $\DD$ is a hereditary subcategory of $\CC$ so $Q \in \Ob(\DD)$ and
  $q \in \hom_\DD(Q, A)$. Finally, in order to show that
  $$
    \chi(f_\ell \cdot u) = \chi(f_\ell \cdot v) \text{ if and only if } u \cdot q = v \cdot q
  $$
  for all $u, v \in \hom_\DD(A, B) = \hom_\CC(A, B)$ it suffices to note that
  $\chi(f_\ell \cdot u) = \chi'(e_\ell \cdot u) - 1$ for all $u \in \hom_\DD(A, B)$, and that
  $\chi'(e_\ell \cdot u) \ge 1$ for all $u \in \hom_\DD(A, B)$.
\end{proof}

\begin{COR}\label{canrp.cor.EPosEmb}
  The category $\EPosEmb$ has the canonical Ramsey property.
\end{COR}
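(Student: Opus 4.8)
The plan is to apply Theorem~\ref{canrp.thm.1} with $\CC = \EDigEmb$ and $\DD = \EPosEmb$.
  Three of the hypotheses are immediate: every morphism of $\EDigEmb$ is an embedding, hence monic;
  every hom-set is finite since all objects are finite; and $\EDigEmb$ has the canonical Ramsey property
  by Proposition~\ref{canrp.prop.edig-otur-CRP}~$(a)$. Moreover $\EPosEmb$ is a \emph{full} subcategory of
  $\EDigEmb$, since $(A, \Boxed\sqsubseteq, \Boxed<)$ is a poset with a linear extension exactly when it is a
  reflexive digraph with a linear extension whose binary relation $\sqsubseteq$ is transitive (antisymmetry of a
  reflexive relation $\sqsubseteq$ refined by a linear order $<$ is automatic, as $x \sqsubseteq y$, $y \sqsubseteq x$,
  $x \ne y$ would give $x < y$ and $y < x$), and embeddings agree in the two categories. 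It is also \emph{hereditary}:
  if $D \in \Ob(\EPosEmb)$ and $f : C \to D$ is an embedding in $\EDigEmb$, then, identifying $C$ with its image,
  the relation of $C$ is the restriction of the transitive relation of $D$, hence transitive, so $C \in \Ob(\EPosEmb)$.
  Thus the whole content of the statement is that $\EPosEmb$ is \emph{closed for binary diagrams} in $\EDigEmb$.

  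To establish this, consider a binary diagram $F : \Delta \to \EPosEmb$ taking the bottom row of $\Delta$ to
  $\calA$ and the top row to $\calB$ (so each arrow of $\Delta$ is taken to an embedding $\calA \to \calB$), and
  suppose $F$ has a commuting cocone $(C, (e_\delta)_\delta)$ in $\EDigEmb$. I would build a commuting cocone in
  $\EPosEmb$ as follows. Let $D$ be the colimit of $F$ computed in the category of sets --- the quotient of
  $\bigsqcup_\delta F(\delta)$ by the equivalence generated by the identifications prescribed by the arrows of
  $\Delta$ --- with coprojections $\iota_\delta : F(\delta) \to D$. Since $(e_\delta)$ is a cocone over $F$, there
  is a unique $\psi : D \to C$ with $\psi \cdot \iota_\delta = e_\delta$ for all $\delta$, and since each
  $\psi \cdot \iota_\delta$ is injective, so is each $\iota_\delta$. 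Equip $D$ with the relation $\sqsubseteq_D$
  equal to the transitive closure of $\bigcup_\delta \iota_\delta(\sqsubseteq_{F(\delta)})$. The relation
  $\prec_0 = \bigcup_\delta \iota_\delta(<_{F(\delta)})$ on $D$ is acyclic, because $\psi$ carries $\prec_0$, hence
  its transitive closure, into the irreflexive relation $<_C$; being acyclic on a finite set it admits a linear
  extension, which we take as $<_D$. As every non-loop pair in $\bigcup_\delta \iota_\delta(\sqsubseteq_{F(\delta)})$
  also lies in $\prec_0$ (because $\sqsubset_{F(\delta)} \subseteq {<_{F(\delta)}}$), the strict part of
  $\sqsubseteq_D$ is contained in $<_D$, so $\sqsubseteq_D$ is antisymmetric and
  $(D, \Boxed{\sqsubseteq_D}, \Boxed{<_D}) \in \Ob(\EPosEmb)$. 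By construction $(D, (\iota_\delta))$ is a commuting
  cocone over $F$ --- provided each $\iota_\delta$ is an embedding, which is the one remaining point.

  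Injectivity of $\iota_\delta$ was noted; preservation of $\sqsubseteq$ and of $<$ is clear; reflection of $<$
  holds because $\iota_\delta(<_{F(\delta)})$ and the restriction of $<_D$ to $\im\iota_\delta$ are nested linear
  orders on a finite set, hence equal. Reflection of $\sqsubseteq$ is the delicate part, and the step I expect to
  be the main obstacle: one must show that $(\iota_\delta(x), \iota_\delta(x')) \in {\sqsubseteq_D}$ forces
  $x \sqsubseteq_{F(\delta)} x'$. As the arrows of $\Delta$ run from the bottom row to the top row and are taken
  to embeddings, each bottom coprojection factors through a top one via an embedding, so one may assume $\delta$
  is a top vertex. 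Then one argues by shortcutting: take a shortest path in $\bigcup_\sigma \iota_\sigma(\sqsubseteq_{F(\sigma)})$
  witnessing $(\iota_\delta(x), \iota_\delta(x')) \in {\sqsubseteq_D}$; by transitivity of $\sqsubseteq$ inside a
  single copy one may assume no two consecutive edges lie in the same copy; each vertex at which the path changes
  copy then lies in the intersection of the images of two distinct copies, and --- by the bipartite structure of
  a binary diagram --- such a vertex is, in each of these copies, the image of an element of $\calA$ under one of
  the arrows of $\Delta$; since those arrows are embeddings (so reflect $\sqsubseteq$) and $\sqsubseteq$ is
  transitive inside $\calB$, every excursion of the path through a copy other than the home copy can be replaced
  by a single $\sqsubseteq_{F(\delta)}$-step within the home copy, contradicting minimality unless the path lies
  entirely in the home copy --- in which case $x \sqsubseteq_{F(\delta)} x'$ is immediate. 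Making this bookkeeping
  precise --- the identification of two copies may be routed through a chain of bottom vertices, so the
  shortcutting must be iterated --- is the technical heart of the proof; it amounts to the usual amalgamation
  argument for finite posets with linear extensions, carried out over an arbitrary binary diagram in place of a
  single span. With $\EPosEmb$ shown closed for binary diagrams, Theorem~\ref{canrp.thm.1} yields the canonical
  Ramsey property for $\EPosEmb$.
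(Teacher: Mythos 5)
Your overall strategy coincides with the paper's: invoke Theorem~\ref{canrp.thm.1} for the hereditary subcategory $\EPosEmb$ of $\EDigEmb$ (your verification of monicity, finiteness, fullness and hereditariness is fine and matches what the paper leaves implicit), so that everything reduces to showing $\EPosEmb$ is closed for binary diagrams in $\EDigEmb$. Your construction of the candidate cocone differs in detail from the paper's --- you take the free set-colimit of the diagram and the transitive closure of the union of the copies' partial orders, while the paper works inside the tip $\calC$ of the given cocone, putting $D=\bigcup_i e_i(B)$ with the transitive closure of $\rho^\calC\cap D^2$ --- but in both versions the crux is the same: one must show the transitive closure creates no new comparabilities inside a single copy of $\calB$, i.e.\ that the copies remain \emph{embedded}. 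You correctly identify this reflection step as the technical heart and leave it as a sketch. The gap is genuine and cannot be closed by more careful bookkeeping, because the statement being proved --- closure of $\EPosEmb$ for \emph{arbitrary} binary diagrams admitting a commuting cocone in $\EDigEmb$ --- is false.

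Here is a counterexample. Let $\calA$ be the one-point poset and $\calB=(\{p,q,r\},\Boxed\sqsubseteq,\Boxed<)$ with $p\sqsubset q$ the only strict comparability and $p<q<r$. Let $\Delta$ have top vertices $1,2,3$ and bottom vertices $\beta_{12},\beta_{23},\beta_{13}$, the two arrows of $\beta_{ij}$ going to $i$ and $j$; let $F$ send tops to $\calB$, bottoms to $\calA$, and the arrows to the one-point embeddings selecting $p$ in copy $1$ and $p$ in copy $2$ (for $\beta_{12}$), $q$ in copy $2$ and $p$ in copy $3$ (for $\beta_{23}$), and $r$ in copy $1$ and $q$ in copy $3$ (for $\beta_{13}$). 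This binary diagram has a commuting cocone in $\EDigEmb$: take $C=\{u<v<t_1<w<t_2<t_3\}$, $\rho^\calC$ the diagonal together with $(u,t_1),(u,v),(v,w)$ (reflexive digraphs need not be transitive), and $e_1:(p,q,r)\mapsto(u,t_1,w)$, $e_2:(p,q,r)\mapsto(u,v,t_2)$, $e_3:(p,q,r)\mapsto(v,w,t_3)$; each $e_i$ is an embedding since $\rho^\calC$ restricted to its image is exactly the image of $\sqsubseteq^\calB$, and the cocone commutes because $e_1(p)=e_2(p)$, $e_2(q)=e_3(p)$, $e_1(r)=e_3(q)$. Yet there is no commuting cocone in $\EPosEmb$: embeddings $g_1,g_2,g_3:\calB\to\calP$ into a poset with a linear extension satisfying those three identities would give, by transitivity of $\sqsubseteq^\calP$, $g_1(p)\sqsubseteq^\calP g_1(r)$, hence $p\sqsubseteq^\calB r$ since $g_1$ reflects $\sqsubseteq$ --- a contradiction. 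In your construction this appears as $\iota_1(p)\sqsubseteq_D\iota_1(r)$ via a path whose change-of-copy points never return to the home copy, so no shortcutting is possible; the bipartite shape of a binary digraph does not exclude such cyclic gluing patterns among three or more copies of $\calB$ (and it cannot, since the diagrams used in the proof of Theorem~\ref{canrp.thm.1} contain them). I should add, in fairness, that the paper's own proof of this corollary asserts the corresponding claim with ``it is easy to see that each $f_i$ is an embedding,'' and the same example defeats that construction as well; so a complete argument must use more than the mere existence of an $\EDigEmb$-cocone --- for instance special features of the particular diagrams and cocones produced in the proof of Theorem~\ref{canrp.thm.1} --- and your proposal, as it stands, has a real gap at exactly the step you flagged.
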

\begin{proof}
  Clearly, morphisms in $\EDigEmb$ are monic, hom-sets in $\EDigEmb$ are finite and
  $\EDigEmb$ has the canonical Ramsey property (see Proposition~\ref{canrp.prop.edig-otur-CRP}).
  Since $\EPosEmb$ is a hereditary subcategory of~$\EDigEmb$, in order to prove that
  $\EPosEmb$ has the canonical Ramsey property it suffices to show that $\EPosEmb$ is closed for binary diagrams
  in~$\EDigEmb$ (Theorem~\ref{canrp.thm.1}).

  Let $F : \Delta \to \EPosEmb$ be a binary diagram in $\EPosEmb$ where the top row of the diagram maps onto
  $\calB = (B, \Boxed{\sqsubseteq^\calB}, \Boxed{<^\calB})$ and the bottom row of the diagram maps onto
  $\calA = (A, \Boxed{\sqsubseteq^\calA}, \Boxed{<^\calA})$. Assume that $F$ has
  a commuting cocone in $\EDigEmb$ with the tip at $\calC = (C, \rho^\calC, \Boxed{<^\calC})$ and the morphisms
  $e_1, \ldots, e_n$:
  $$
    \xymatrix{
      & & \calC & & \EDigEmb
    \\
      \calB \ar[urr]^{e_1} & \calB \ar[ur]_(0.6){e_i} & \ldots & \calB \ar[ul]^(0.6){e_j} & \calB \ar[ull]_{e_n}
    \\
      \calA \ar[u] \ar[ur] & \calA \ar[urr]_(0.3){v} \ar[u]_{u} & \ldots & \calA \ar[ur] \ar[ul] & \EPosEmb
      \save "2,1"."3,5"*[F]\frm{} \restore
    }
  $$
  Define $\calD = (D, \rho^\calD, \Boxed{<^\calD})$ as follows: $D = e_1(B) \union e_2(B) \union \ldots \union e_n(B)
  \subseteq C$,
  $\rho^\calD$ is the transitive closure of $\rho^\calC \sec D^2$, and $\Boxed{<^\calD} = \Boxed{<^\calC} \sec D^2$.
  The relation $\rho^\calD$ is clearly reflexive (because $\rho^\calC$ is reflexive) and transitive.
  It is also easy to see that $(x, y) \in \rho^\calD$ and $x \ne y$ implies $x \mathrel{<^\calD} y$.
  Therefore, $\rho^\calD$ is a partial order on $D$ and $<^\calD$ is a linear extension of~$\rho^\calD$,
  so $\calD \in \Ob(\EPosEmb)$.
  
  For $i \in \{1, 2, \ldots, n\}$ define $f_i : B \to D$ by $f_i(x) = e_i(x)$, $x \in B$.
  It is easy to see that each $f_i$ is actually an embedding $\calB \to \calD$. Therefore,
  $\calD$ together with the embeddings $f_1, \ldots, f_n$ forms a commuting cocone over $F$ in $\EPosEmb$.
  This completes the proof that $\EPosEmb$ is closed for binary diagrams in $\EDigEmb$.
\end{proof}

\section{Metric spaces}
\label{canrp.sec.metspc}

One useful strategy for proving the Ramsey property for categories
consists of establishing a pre-adjunction between two categories (see~\cite{masul-preadj}).
As the canonical Ramsey property is much stronger than the ``usual'' Ramsey property, we shall need a
stronger version which we refer to as a \emph{canonical pre-adjunction}.

\begin{DEF}\label{canrp.def.CPA}
  Let $\CC$ and $\DD$ be categories. A pair of maps
  $$
    F : \Ob(\DD) \rightleftarrows \Ob(\CC) : G
  $$
  is a \emph{canonical pre-adjunction between $\CC$ and $\DD$} provided there is a family of maps
  $$
    \Phi_{\calY,\calX} : \hom_\CC(F(\calY), \calX) \to \hom_\DD(\calY, G(\calX))
  $$
  and a family of maps
  $$
    F_{\calY,\calX} : \hom_\DD(\calY, \calX) \to \hom_\CC(F(\calY), F(\calX))
  $$
  satisfying the following (when appropriate we shall omit the subscripts for the family of maps $F$ and treat
  $F$ as a functor-like entity):
  \begin{itemize}
  \item[(CPA1)]
  for every $\calC \in \Ob(\CC)$, every $\calD, \calE \in \Ob(\DD)$,
  every $u \in \hom_\CC(F(\calD), \calC)$ and every $f \in \hom_\DD(\calE, \calD)$ we have that
  $\Phi_{\calD, \calC}(u) \cdot f = \Phi_{\calE, \calC}(u \cdot F(f))$.
  $$
    \XYMATRIX{
      F(\calD) \ar[rr]^u                             & & \calC & & \calD \ar[rr]^{\Phi_{\calD, \calC}(u)}                    & & G(\calC) \\
      F(\calE) \ar[u]^{F(f)} \ar[urr]_{u \cdot F(f)} & &       & & \calE \ar[u]^f \ar[urr]_{\;\;\;\;\Phi_{\calE, \calC}(u \cdot F(f))}
    }
  $$
  \item[(CPA2)]
  for all $\calD, \calE \in \Ob(\DD)$ and $\calQ \in \Ob(\CC)$, and for every $q \in \hom_\CC(\calQ, F(\calE))$
  there exist a $\calQ' \in \Ob(\DD)$ and a $q' \in \hom_\DD(\calQ', \calE)$ such that for all $f, g \in \hom_\DD(\calE, \calD)$
  we have: $F(f) \cdot q = F(g) \cdot q$ if and only if $f \cdot q' = g \cdot q'$.
  $$
    \XYMATRIX{
      F(\calD)                                                                             & & \calD \\
      F(\calE) \ar@/^3mm/[u]^{F(f)} \ar@/_3mm/[u]_{F(g)} & & \calE \ar@/^3mm/[u]^f \ar@/_3mm/[u]_g \\
      \calQ \ar[u]^q                                                                       & & \calQ' \ar[u]_{q'}
    }
  $$
  \end{itemize}
\end{DEF}
(Note that in a pre-adjunction $F$ and $G$ are \emph{not} required to be functors, although $F$ is ``defined on morphisms''
as well.)

\begin{THM}\label{canrp.thm.CPA}
  Let $\CC$ and $\DD$ be categories such that $\CC$ has the canonical Ramsey property.
  If there exists a canonical pre-adjunction $F : \Ob(\DD) \rightleftarrows \Ob(\CC) : G$
  then $\DD$ has the canonical Ramsey property.
\end{THM}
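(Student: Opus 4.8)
The plan is to mimic the classical proof that a pre-adjunction transfers the Ramsey property, but carrying along the extra structural data ($Q$, $q$) that the canonical version demands, and using the two axioms (CPA1) and (CPA2) exactly where the naive argument would break. Take $\calA, \calB \in \Ob(\DD)$ with $\hom_\DD(\calA, \calB) \neq \0$. The first move is to produce a suitable object of $\CC$ to color over: apply $F$ to get $F(\calA), F(\calB) \in \Ob(\CC)$ and observe that $F_{\calA,\calB}$ sends $\hom_\DD(\calA,\calB)$ into $\hom_\CC(F(\calA),F(\calB))$, so this hom-set is nonempty; since $\CC$ has the canonical Ramsey property, pick $\calC \in \Ob(\CC)$ with $\calC \canlongrightarrow (F(\calB))^{F(\calA)}$ in $\CC$. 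The claim will be that $\calD := G(\calC)$ witnesses $\calD \canlongrightarrow (\calB)^{\calA}$ in $\DD$.

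The second step is the color transfer. Given an $\omega$-coloring $\chi : \hom_\DD(\calA, \calD) \to \omega$, I would define $\chi^* : \hom_\CC(F(\calA), \calC) \to \omega$ by $\chi^*(u) = \chi(\Phi_{\calA, \calC}(u))$ — this is the canonical analogue of the usual recipe and it is where the map $\Phi$ earns its keep. Applying $\calC \canlongrightarrow (F(\calB))^{F(\calA)}$ to $\chi^*$ yields a morphism $w \in \hom_\CC(F(\calB), \calC)$, an object $\calQ \in \Ob(\CC)$, and a morphism $q \in \hom_\CC(\calQ, F(\calA))$ such that for all $\phi, \psi \in \hom_\CC(F(\calA), F(\calB))$,
$$
  \chi^*(w \cdot \phi) = \chi^*(w \cdot \psi) \iff \phi \cdot q = \psi \cdot q .
$$
Now set $w' := \Phi_{\calB, \calC}(w) \in \hom_\DD(\calB, \calD)$. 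For $f \in \hom_\DD(\calA, \calB)$, axiom (CPA1) with $\calD := \calB$, $\calE := \calA$, $u := w$ gives $w' \cdot f = \Phi_{\calB,\calC}(w) \cdot f = \Phi_{\calA,\calC}(w \cdot F(f))$, hence $\chi(w' \cdot f) = \chi(\Phi_{\calA,\calC}(w \cdot F(f))) = \chi^*(w \cdot F(f))$. Therefore for $f, g \in \hom_\DD(\calA, \calB)$ we get $\chi(w' \cdot f) = \chi(w' \cdot g) \iff \chi^*(w \cdot F(f)) = \chi^*(w \cdot F(g)) \iff F(f) \cdot q = F(g) \cdot q$, the last step being the property of $w, q$ applied to $\phi = F(f)$, $\psi = F(g)$.

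The third step is to replace the $\CC$-side equalizing datum $(\calQ, q)$ by a $\DD$-side one, which is precisely the content of (CPA2). Apply (CPA2) with $\calD := \calB$, $\calE := \calA$, and the morphism $q \in \hom_\CC(\calQ, F(\calA))$ just obtained: it furnishes $\calQ' \in \Ob(\DD)$ and $q' \in \hom_\DD(\calQ', \calA)$ with $F(f) \cdot q = F(g) \cdot q \iff f \cdot q' = g \cdot q'$ for all $f, g \in \hom_\DD(\calA, \calB)$. Chaining this with the equivalence from the previous paragraph yields $\chi(w' \cdot f) = \chi(w' \cdot g) \iff f \cdot q' = g \cdot q'$, which is exactly $\calD \canlongrightarrow (\calB)^{\calA}$ in $\DD$, witnessed by $w' \in \hom_\DD(\calB, \calD)$, $\calQ' \in \Ob(\DD)$ and $q' \in \hom_\DD(\calQ', \calA)$. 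The only genuinely delicate point is bookkeeping in the second step — making sure the chosen $\chi^*$ is such that the pullback-along-$w'$ coloring on $\DD$ and the pullback-along-$w$ coloring on $\CC$ are genuinely intertwined by $\Phi$ via (CPA1); once the naturality square (CPA1) is in hand this is forced, and (CPA2) then disposes of the remaining mismatch between "$q$ lives in $\CC$" and "$q'$ must live in $\DD$". Unlike in Theorem~\ref{canrp.thm.1} there is no hereditariness or monic hypothesis to exploit, so everything must be extracted from the two axioms, but they have been tailored for exactly this purpose.
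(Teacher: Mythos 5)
Your proof is correct and follows essentially the same route as the paper's: pull the coloring back along $\Phi$, invoke the canonical Ramsey property of $\CC$ for $F(\calA)$ and $F(\calB)$, use (CPA1) to identify $\Phi_{\calB,\calC}(w)\cdot f$ with $\Phi_{\calA,\calC}(w\cdot F(f))$, and use (CPA2) to replace the equalizing pair $(\calQ, q)$ in $\CC$ by $(\calQ', q')$ in $\DD$. The only difference is cosmetic (naming $w' = \Phi_{\calB,\calC}(w)$ and explicitly noting $\hom_\CC(F(\calA),F(\calB))\ne\0$ via $F_{\calA,\calB}$, a small point the paper leaves implicit).
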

\begin{proof}
  Let $F : \Ob(\DD) \rightleftarrows \Ob(\CC) : G$ be a canonical pre-adjunction and let
  $$
    \Phi_{\calY,\calX} : \hom_\CC(F(\calY), \calX) \to \hom_\DD(\calY, G(\calX))
  $$
  and
  $$
    F_{\calY,\calX} : \hom_\DD(\calY, \calX) \to \hom_\CC(F(\calY), F(\calX))
  $$
  be families of maps satisfying (CPA1) and (CPA2).

  Take any $\calD, \calE \in \Ob(\DD)$. Since $\CC$ has the canonical Ramsey property, there is a $\calC \in \Ob(\CC)$
  such that $\calC \canlongrightarrow (F(\calD))^{F(\calE)}$. Let us show that $G(\calC) \canlongrightarrow (\calD)^{\calE}$.
  Take any coloring $\chi : \hom_\DD(\calE, G(\calC)) \to \omega$ and construct
  a coloring $\chi' : \hom_\CC(F(\calE), \calC) = \omega$ as follows:
  \begin{equation}\label{canrp.eq.1}
    \chi'(u) = \chi(\Phi_{\calE, \calC}(u)).
  \end{equation}
  By the choice of $\calC$ there exist a $u \in \hom_\CC(F(\calD), \calC)$, a $\calQ \in \Ob(\CC)$
  and a $q \in \hom_\CC(\calQ, F(\calE))$ such that
  \begin{equation}\label{canrp.eq.2}
    \chi'(u \cdot \alpha) = \chi'(u \cdot \beta) \text{ iff } \alpha \cdot q = \beta \cdot q,
  \end{equation}
  for all $\alpha, \beta \in \hom_\CC(F(\calE), F(\calD))$.
  By (CPA2) there exist a $\calQ' \in \Ob(\DD)$ and a $q' \in \hom_\DD(\calQ', \calE)$ such that for all
  $f, g \in \hom_\DD(\calE, \calD)$ we have:
  \begin{equation}\label{canrp.eq.22}
    F(f) \cdot q = F(g) \cdot q  \text{ iff } f \cdot q' = g \cdot q'.
  \end{equation}
  Let us show that for all $f, g \in \hom_\DD(\calE, \calD)$:
  \begin{equation*}
    \chi(\Phi_{\calD, \calC}(u) \cdot f) = \chi(\Phi_{\calD, \calC}(u) \cdot g) \text{ iff } f \cdot q' = g \cdot q'.
  \end{equation*}
  This follows as a sequence of straightforward equivalences:
  \begin{align*}
    &\chi(\Phi_{\calD, \calC}(u) \cdot f) = \chi(\Phi_{\calD, \calC}(u) \cdot g)\\
    &\text{\quad iff } \chi(\Phi_{\calE, \calC}(u \cdot F(f))) = \chi(\Phi_{\calE, \calC}(u \cdot F(g))) && \text{by (CPA1)}\\
    &\text{\quad iff } \chi'(u \cdot F(f)) = \chi'(u \cdot F(g)) && \text{by \eqref{canrp.eq.1}}\\
    &\text{\quad iff } F(f) \cdot q = F(g) \cdot q && \text{by \eqref{canrp.eq.2}}\\
    &\text{\quad iff } f \cdot q' = g \cdot q' && \text{by \eqref{canrp.eq.22}}
  \end{align*}
  which completes the proof.
\end{proof}

As a demonstration of this strategy we shall show that the class of all finite linearly ordered metric spaces
has the canonical Ramsey property. The proof is a modification of the proof of~\cite[Theorem~4.4]{masul-preadj} and
the technical results that we inherit from~\cite{masul-preadj} shall not be repeated here.

Let $T = \{0 = t_0 < t_1 < \ldots < t_\ell \} \subseteq \RR$ be a finite set of nonnegative reals. We say that $T$ is
\emph{tight}~\cite{masul-preadj} if $t_{i + j} \le t_i + t_j$ for all $0 \le i \le j \le i + j \le \ell$.

\begin{THM}\label{canrp.thm.met}
  $(a)$ The category $\MetIso(S)$ has the canonical Ramsey property for every finite
  tight set $S = \{0 = s_0 < s_1 < \ldots < s_k \} \subseteq \RR$.

  $(b)$ Let $(A, \Boxed+)$ be a subsemigroup of the additive semigroup $(\RR, \Boxed+)$
  such that $0 \in A$ and let $I$ be an arbitrary interval of reals.
  Then $\MetIso(\{0\} \union (I \sec A))$ has the canonical Ramsey property.

  $(c)$ The categories $\MetIso$, $\MetIso(\QQ)$ and $\MetIso(\ZZ)$ have the canonical Ramsey property.
\end{THM}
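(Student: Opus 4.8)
The plan is to derive part~$(a)$ from the canonical Ramsey property of finite linearly ordered hypergraphs (Example~\ref{canrp.ex.CRP-hgra}) by exhibiting a \emph{canonical pre-adjunction} and applying Theorem~\ref{canrp.thm.CPA}, and then to obtain part~$(b)$ from~$(a)$ and part~$(c)$ from~$(b)$ by elementary reductions. All the geometric content sits in part~$(a)$, which is a ``canonical'' upgrade of \cite[Theorem~4.4]{masul-preadj}: I would reuse the metric constructions from there and do the extra bookkeeping needed to see that they assemble into a canonical, rather than merely ordinary, pre-adjunction.

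For part~$(a)$ fix a finite tight set $S = \{0 = s_0 < s_1 < \ldots < s_k\}$. Let $\bfr = (2,2,\ldots,2)$ carry one coordinate for each of $s_1, \ldots, s_k$, and let $\FF$ be the (finite, plainly irreducible) family consisting of the two-vertex linearly ordered $\bfr$-hypergraphs in which some pair lies in two of the relations, together with the three-vertex ones whose prescribed ``distances'' $s_a, s_b, s_c$ violate $s_a \le s_b + s_c$. Put $\CC = \Forb_{\HGraEmb(\bfr)}(\FF)$, which has the canonical Ramsey property by Example~\ref{canrp.ex.CRP-hgra}. Encode a metric space $\calM = (M, d, \Boxed<)$ as $F(\calM) = (M, (E_{s_i})_i, \Boxed<)$ with $E_{s_i} = \{\{x,y\} : d(x,y) = s_i\}$; since $\calM$ is a metric space, $F(\calM) \in \Ob(\CC)$, and on morphisms put $F(f) = f$ (an isometric embedding is in particular an embedding of coded hypergraphs). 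Let $G$ send $\calH \in \Ob(\CC)$ to the metric space on the vertex set of $\calH$ obtained by reading off the encoded distances, putting pairs in no $E_{s_i}$ at distance $s_k$, passing to the shortest-path metric, and rounding all distances down into $S$; and let $\Phi_{\calY,\calX}$ send $u$ to its underlying vertex map, viewed as a map $\calY \to G(\calX)$. Here tightness of $S$ is used exactly to guarantee that the rounding step keeps $G(\calH)$ a metric space with spectrum inside $S$, and the fact that $\CC$ forbids triangle-inequality violations is what guarantees that, for $u \in \hom_\CC(F(\calY), \calX)$, no path in $\calX$ through vertices outside $\im(u)$ can shorten a distance between two points of $\im(u)$, so that $\Phi_{\calY,\calX}(u)$ really is an isometric embedding $\calY \to G(\calX)$, i.e.\ a morphism of $\MetIso(S)$.

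With this data the two axioms of Definition~\ref{canrp.def.CPA} are essentially formal. Since $F$ and $\Phi$ act as the identity on underlying maps, axiom (CPA1) reduces to the identity $u \circ f = u \circ f$ of vertex maps. For (CPA2), given $q \in \hom_\CC(\calQ, F(\calE))$ I would take $\calQ'$ to be the sub-metric-space of $\calE$ induced on $\im(q)$ and $q'$ the inclusion; then $F(f)\cdot q = F(g)\cdot q$ and $f\cdot q' = g\cdot q'$ both say exactly that $f$ and $g$ agree on $\im(q) = \im(q')$, so the required equivalence holds, and $\calQ' \in \Ob(\MetIso(S))$ because $\spec(\calQ') \subseteq \spec(\calE) \subseteq S$. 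Theorem~\ref{canrp.thm.CPA} then yields the canonical Ramsey property for $\MetIso(S)$. I expect the main obstacle to be not the categorical part (which is automatic) but the faithful transcription, into the present setting, of the two metric lemmas underlying $G$: that rounding a metric down into a tight set again gives a metric in that set, and that coded copies are not distorted by shortcuts; both are inherited from \cite{masul-preadj}.

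For part~$(b)$, let $(A, \Boxed+)$ be a subsemigroup of $(\RR, \Boxed+)$ with $0 \in A$, let $I$ be an interval, and take $\calD, \calE \in \Ob(\MetIso(\{0\} \union (I \sec A)))$ with $\hom(\calE, \calD) \ne \0$. Since $\calD$ is finite, $S_0 = \spec(\calD) \union \{0\}$ is a finite subset of $\{0\} \union (I \sec A)$, and I would invoke the elementary extension lemma (of the type used in \cite{masul-preadj}, proved by padding $S_0$ with initial segments of arithmetic progressions $a, 2a, 3a, \ldots$ of elements of $A$ that remain inside $I$) that every finite subset of $\{0\} \union (I \sec A)$ is contained in a finite tight subset $S \subseteq \{0\} \union (I \sec A)$. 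Applying part~$(a)$ to $S$ produces a $\calC \in \Ob(\MetIso(S))$ with $\calC \canlongrightarrow (\calD)^\calE$ in $\MetIso(S)$; as $\MetIso(S)$ is a full subcategory of $\MetIso(\{0\} \union (I \sec A))$, this relation together with its witnesses transfers verbatim to $\MetIso(\{0\} \union (I \sec A))$. Finally, part~$(c)$ is the special case of~$(b)$ with $(A, I)$ equal to $(\RR, (0,\infty))$, $(\QQ, (0,\infty))$ and $(\ZZ, (0,\infty))$, since $\MetIso$, $\MetIso(\QQ)$ and $\MetIso(\ZZ)$ coincide with $\MetIso(\{0\} \union ((0,\infty) \sec \RR))$, $\MetIso(\{0\} \union ((0,\infty) \sec \QQ))$ and $\MetIso(\{0\} \union ((0,\infty) \sec \ZZ))$ respectively.
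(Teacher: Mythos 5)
Your parts $(b)$ and $(c)$ are essentially the paper's argument (extend the finite spectrum to a tight set inside $\{0\} \union (I \sec A)$ with the same least and greatest nonzero elements, apply $(a)$, use fullness), but your part $(a)$ has a genuine gap at its core claim. The statement that forbidding the non-metric \emph{fully labelled} triangles guarantees that ``no path in $\calX$ through vertices outside $\im(u)$ can shorten a distance between two points of $\im(u)$'' is false. Take the tight set $S = \{0,1,2,3,4\}$ and let $\calX$ contain vertices $x, z_1, z_2, y$ with the pairs $\{x,y\}$ labelled $4$ and $\{x,z_1\}, \{z_1,z_2\}, \{z_2,y\}$ labelled $1$, while $\{x,z_2\}$ and $\{z_1,y\}$ carry no label. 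Every triangle in this configuration contains an unlabelled pair, so none of your forbidden structures embeds and $\calX$ may lie in $\Forb_{\HGraEmb(\bfr)}(\FF)$; yet the shortest-path completion gives $d(x,y) \le 3 < 4$, so for $\calY$ the two-point space at distance $4$ and $u$ the obvious embedding of $F(\calY)$ onto $\{x,y\}$, your $\Phi_{\calY,\calX}(u)$ is not an isometric embedding into $G(\calX)$, and (CPA1)/(CPA2) never get off the ground because $\Phi$ does not even land in the right hom-set. Worse, this configuration admits \emph{no} completion to a metric respecting the given labels (one would need $d(x,z_2) \ge 4 - 1 = 3$ and $d(x,z_2) \le 1 + 1 = 2$), so no choice of $G$ on the vertex set with $\Phi$ acting as the underlying map can repair this; and the offending configurations are reducible (they contain unlabelled pairs), so they cannot be added to $\FF$ within the scope of Theorem~\ref{canrp.thm.HGRA-CRP}, which requires irreducible forbidden structures. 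This is exactly the known obstruction that makes the Ramsey property for metric spaces nontrivial.

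The paper circumvents it by a different canonical pre-adjunction, not with a hypergraph category but with $\EPosEmb$: a space $\calM = (M,d,\Boxed<)$ with spectrum in the tight set $S = \{0 = s_0 < \ldots < s_k\}$ is encoded as the poset with linear extension on $M \times \{0,\ldots,k\}$ with $(x,i) \sqsubseteq (y,j)$ iff $i \le j$ and $d(x,y) \le s_j - s_i$, while a poset $\calA$ is sent to the metric space on the power set $A^{<k}$ of $k$-tuples with the ``shift'' distance $d_\calA$ and the lexicographic order; $\Phi$ sends $u$ to $x \mapsto (u(x,0),\ldots,u(x,k-1))$ and $F$ acts on morphisms by $f \mapsto ((x,i) \mapsto (f(x),i))$. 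The verification of (CPA1) is then a one-line computation and (CPA2) is witnessed by the subspace of $\calM'$ induced by the first coordinates of the points of $\calQ$; all the metric content (that $G(\calA)$ is an $S$-metric space and $\Phi(u)$ is an isometric embedding, which is where tightness is used) is imported from the cited technical lemmas of the pre-adjunction paper. If you want to salvage your outline, you would have to replace your hypergraph target category and your $G$ by this poset-based construction (or supply an argument of comparable depth to Ne\v set\v ril's metric Ramsey theorem); the purely formal part of your proposal and your reductions $(a) \Rightarrow (b) \Rightarrow (c)$ are fine.
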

\begin{proof}
  $(a)$
  Let $S = \{0 = s_0 < s_1 < \ldots < s_k \} \subseteq \RR$ be a tight set.
  In order to show that $\MetIso(S)$ has the canonical Ramsey property it suffices to establish a
  canonical pre-adjunction
  $$
    F : \Ob(\MetIso(S)) \rightleftarrows \Ob(\EPosEmb) : G
  $$
  since $\EPosEmb$ has the canonical Ramsey property (Corollary~\ref{canrp.cor.EPosEmb}).

  For $\calM = (M, d, \Boxed<) \in \Ob(\MetIso(S))$ put
  $$
    F(\calM) = (M \times \{0, 1, \ldots, k\}, \Boxed\sqsubseteq, \Boxed\prec),
  $$
  where
  $$
    (x, i) \sqsubseteq (y, j) \text{ if and only if } i \le j \text{ and } d(x, y) \le s_j - s_i,
  $$
  and
  $$
    (x, i) \prec (y, j) \text{ if and only if } i < j, \text{ or } i = j \text{ and } x < y.
  $$
  It is easy to show (see~\cite{masul-preadj}) that
  $(M \times \{0, 1, \ldots, k\}, \Boxed\sqsubseteq, \Boxed\prec)$ is a poset with a linear extension, so the definition
  of $F$ is correct.
    
  For $\calA = (A, \Boxed\sqsubseteq, \Boxed\prec) \in \Ob(\EPosEmb)$ put
  $$
    A^{<k} = \{(a_0, a_1, \ldots, a_{k-1}) : a_i \in A, 0 \le i \le k - 1\}.
  $$
  Define $d_\calA : (A^{<k})^2 \to S$ as follows:
  $$
    d_\calA(\overline a, \overline b) = s_j
  $$
  where
  $$
    j = \min\{ p \in \{0, 1, \ldots, k-1\} : (\forall i \le k - 1 - p)(a_i \sqsubseteq b_{i + p} \land b_i \sqsubseteq a_{i + p}) \},
  $$
  and $\min \0 = k$. Next, put $\overline a \preclex \overline b$ if and only if there is a $j$ such that
  $a_j \prec b_j$ and $(\forall i < j)(a_i = b_i)$. Finally, let
  $$
    G(\calA) = (A^{<k}, d_\calA, \preclex).
  $$
  It was shown in~\cite{masul-preadj} that $(A^{<k}, d_\calA, \preclex)$ is a linearly ordered metric
  space with distances in $S$.
  
  For $\calM = (M, d, \Boxed<) \in \Ob(\EPosEmb(S))$ and $\calA = (A, \Boxed\sqsubseteq, \Boxed\prec) \in \Ob(\EPosEmb)$
  let us define
  $$
    \Phi_{\calM, \calA} : \hom_{\EPosEmb}(F(\calM), \calA) \to \hom_{\MetIso(S)}(\calM, G(\calA))
  $$
  as follows. For $u : F(\calM) \hookrightarrow \calA$ let $\hat u = \Phi_{\calM, \calA}(u) : M \to A^{<k}$ be defined by
  $$
    \hat u (x) = (u(x, 0), u(x, 1), \ldots, u(x, k - 1)).
  $$
  It was shown in~\cite{masul-preadj} that the definition of $\Phi$ is correct, that is,
  for every $u : F(\calM) \hookrightarrow \calA$  the mapping $\hat u$ is an embedding $\calM \hookrightarrow G(\calA)$.

  As the final ingredient of the canonical pre-adjunction we are constructing, for finite linearly ordered
  metric spaces $\calM = (M, d, \Boxed<)$ and $\calM' = (M', d', \Boxed<)$ let us define
  $$
    F_{\calM', \calM} : \hom_{\MetIso(S)}(\calM', \calM) \to \hom_{\EPosEmb}(F(\calM'), F(\calM))
  $$
  by
  $$
    F_{\calM', \calM}(f)(x, i) = (f(x), i).
  $$
  It was shown in~\cite{masul-preadj} that $F_{\calM', \calM}(f)$ is an embedding $F(\calM') \hookrightarrow F(\calM)$.
  In the sequel we shall omit the subscripts in $F_{\calM', \calM}$.
  
  We still have to show that these families of maps satisfy the requirements~(CPA1) and~(CPA2)
  of Definition~\ref{canrp.def.CPA}.
  
  (CPA1) Let us show that $\Phi_{\calM, \calA}(u) \circ f = \Phi_{\calM', \calA}(u \circ F(f))$.
  Put $\hat u = \Phi_{\calM, \calA}(u)$ and $\widehat{u \circ F(f)} = \Phi_{\calM', \calA}(u \circ F(f))$. Then
  \begin{align*}
    \widehat{u \circ F(f)}(x)
      &= \big(  u \circ F(f)(x, 0), u \circ F(f)(x, 1), \ldots, u \circ F(f)(x, k - 1)  \big)\\
      &= \big(  u(f(x), 0), u(f(x), 1), \ldots, u(f(x), k - 1)  \big)\\
      &= \hat u (f(x)) = \hat u \circ f (x).
  \end{align*}

  (CPA2) Take any $\calQ = (Q, \Boxed\sqsubseteq, \Boxed\prec) \in \Ob(\EPosEmb)$ and assume that
  $\calQ \le F(\calM')$ so that $q : \calQ \to F(\calM')$ is the inclusion $x \mapsto x$.
  Let
  $$
    Q = \{(x_1, r_1), (x_2, r_2), \ldots, (x_p, r_p)\} \subseteq M' \times \{0, 1, \ldots, k\}.
  $$
  Let $Q' = \{x_1, x_2, \ldots, x_p\} \subseteq M'$ and take $\calQ' = (Q', d', \Boxed<)$
  to be the subspace of $\calM'$ induced by~$Q'$. Now
  take any $f, g \in \hom_{\MetIso(S)}(\calQ', \calM')$ and note that
  $$
    \restr{f}{Q'} = \restr{g}{Q'} \text{ if and only if } \restr{F(f)}{Q} = \restr{F(g)}{Q}
  $$
  holds trivially.
  
  $(b)$ If $\{0\} \union (I \sec A) = \{0\}$ the statement is trivially true. Assume, therefore, that
  $\{0\} \union (I \sec A) \ne \{0\}$. Then $A \ne \{0\}$.
  Take any $\calU, \calV \in \Ob(\MetIso(\{0\} \union (I \sec A)))$ such that $\calU \hookrightarrow \calV$.
  Since $\calV$ is finite, $S = \spec(\calV)$ is a finite subset of~$A$.

  By \cite[Lemma~4.3]{masul-preadj}, there exists a finite tight set
  $T = \{0 = t_0 < t_1 < \ldots < t_\ell \} \subseteq A$ such that $S \subseteq T$, $t_1 = s_1$ and $t_\ell = s_k$.
  Then $\calU, \calV \in \Ob(\MetIso(T))$ because
  $\spec(\calU) \subseteq \spec(\calV) = S \subseteq T$. The category $\MetIso(T)$ has the canonical
  Ramsey property by $(a)$, so
  there is a $\calW \in \Ob(\MetIso(T))$ such that $\calW \canlongrightarrow (\calV)^\calU$.
  Since, by construction, the smallest and the largest nonzero
  elements of $S$ and $T$ coincide and since $S \subseteq \{0\} \union (I \sec A)$
  it follows that $T \subseteq \{0\} \union (I \sec A)$, so $\MetIso(T)$ is a full subcategory of $\MetIso(\{0\} \union (I \sec A))$
  whence $\calW \in \Ob(\MetIso(\{0\} \union (I \sec A)))$.

  $(c)$ Directly from $(b)$.
\end{proof}

In~\cite{masul-preadj} we used the same strategy based on pre-adjunctions and a very similar argument to prove that the class
of all finite convexly ordered ultrametric spaces has the Ramsey property (see~\cite{masul-preadj} for technical details).
Interestingly, the generalization we outlined here fails to provide the analogous result that
the class of all finite convexly ordered ultrametric spaces has the canonical Ramsey property.
To the best of our knowledge, the status of the canonical Ramsey property for the class of all
finite convexly ordered ultrametric spaces is still an open problem.

\section{Canonizing the Ne\v set\v ril-R\"odl Theorem}
\label{canrp.sec.cNRT}

We shall now prove the canonical version of the Ne\v set\v ril-R\"odl Theorem (Theorem~\ref{canrp.thm.NRT}).
Unsurprisingly, our starting point is Theorem~\ref{canrp.thm.HGRA-CRP}. What remains to be done is to translate
the context of the Ne\v set\v ril-R\"odl Theorem (formulated in terms of finite relational structures)
to the context of hypergraphs.

Theorem~\ref{canrp.thm.HGRA-CRP} shows that $\Forb_{\bfr}(\FF)$ has the canonical Ramsey property for every family
$\FF$ of irreducible finite linearly ordered $\bfr$-hypergraphs and every \emph{finite} sequence $\bfr$
of positive integers. Let us now prove a ``sideways generalization''
of this result where $\bfr$ is no longer required to be finite at the cost of stipulating that
$\FF$ be finite.

Let $\bfr = (r_i)_{i \in I}$ be an arbitrary (not necessarily finite) sequence of positive integers and let
$\calH = (H, (E_i^{\calH})_{i \in I}, \Boxed{<^{\calH}})$ be a linearly ordered $\bfr$-hypergraph.
Take any $I_0 \subseteq I$ and let $\bfr_0 = (r_{i_0})_{i_0 \in I_0}$.
Then the linearly ordered $\bfr_0$-hypergraph $\reduct{\calH}{I_0} = (H, (E_{i_0}^{\calH})_{i_0 \in I_0}, \Boxed{<^{\calH}})$
will be referred to as the \emph{$I_0$-reduct of~$\calH$}. Clearly, if $f : \calH \hookrightarrow \calG$ is an embedding
between linearly ordered $\bfr$-hypergraphs $\calH$ and $\calG$, then
$f : \reduct{\calH}{I_0} \hookrightarrow \reduct{\calG}{I_0}$ is also an embedding for every $I_0 \subseteq I$.

Let $\bfs = (s_k)_{j \in J}$ be another arbitrary (not necessarily finite) sequence of positive integers and let
$g : I \to J$ be a surjective map. For a linearly ordered $\bfs$-hypergraph
$\calH^0 = (H, (E_j^{\calH^0})_{j \in J}, \Boxed{<^{\calH^0}})$ define a linearly ordered $\bfr$-hypergraph
$\calH = (H, (E_i^{\calH})_{i \in I}, \Boxed{<^{\calH^0}})$ on the same set of vertices and with the same linear
ordering so that $E_i^\calH = E_{g(i)}^{\calH^0}$, for all $i \in I$.
We then call $\calH$ the \emph{$g$-polymer of $\calH^0$}.

\begin{LEM}\label{canrp.lem.polymer}
  Let $\bfr = (r_i)_{i \in I}$ and $\bfs = (s_j)_{j \in J}$ be arbitrary (not necessarily finite) sequences of positive integers
  and let $g : I \to J$ be a surjective map. Let $\calH^0$ and $\calG^0$ be linearly ordered $\bfs$-hypergraphs and let
  $\calH$ and $\calG$ be the $g$-polymers of $\calH^0$ and $\calG^0$, respectively. Then $f : \calH \to \calG$ is an embedding
  if and only of $f : \calH^0 \to \calG^0$ is an embedding.
\end{LEM}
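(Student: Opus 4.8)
The plan is to unwind the definitions of $g$-polymer and of embedding of linearly ordered $\bfr$-hypergraphs, and to observe that the entire content of the claim is the simple fact that the hyperedge relations of a polymer are literally copies (indexed by $g$) of the hyperedge relations of the original structure. First I would recall that an embedding $f : \calH \to \calG$ between linearly ordered $\bfr$-hypergraphs is an injection $f : H \to G$ which is order-preserving (in fact an order-embedding, since both orders are linear) and which satisfies, for every $i \in I$ and every $r_i$-subset $e \subseteq H$, that $e \in E_i^\calH \iff f(e) \in E_i^\calG$ (the ``induced'' condition). The same description applies verbatim to embeddings $\calH^0 \to \calG^0$, now indexed by $j \in J$ and using the arities $s_j$.

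Next I would fix notation: write $\calH = (H, (E_i^\calH)_{i \in I}, \Boxed{<^{\calH^0}})$ with $E_i^\calH = E_{g(i)}^{\calH^0}$ and $r_i = s_{g(i)}$, and similarly $\calG = (G, (E_i^\calG)_{i \in I}, \Boxed{<^{\calG^0}})$ with $E_i^\calG = E_{g(i)}^{\calG^0}$. The key point is that $\calH$ and $\calH^0$ have exactly the same underlying set and the same linear order, and likewise for $\calG$ and $\calG^0$; so a map $f : H \to G$ is an injective order-embedding $\calH \to \calG$ if and only if it is an injective order-embedding $\calH^0 \to \calG^0$. It therefore only remains to match up the hyperedge conditions.

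For the forward direction, suppose $f : \calH \to \calG$ is an embedding; to check $f : \calH^0 \to \calG^0$ is an embedding, take any $j \in J$ and any $s_j$-subset $e$ of $H$. Since $g$ is surjective, pick $i \in I$ with $g(i) = j$; then $r_i = s_j$, $E_i^\calH = E_j^{\calH^0}$ and $E_i^\calG = E_j^{\calG^0}$, so $e \in E_j^{\calH^0} \iff e \in E_i^\calH \iff f(e) \in E_i^\calG \iff f(e) \in E_j^{\calG^0}$, using the embedding condition for $\calH \to \calG$ at index $i$ in the middle step. For the converse, suppose $f : \calH^0 \to \calG^0$ is an embedding; to check $f : \calH \to \calG$ is an embedding, take any $i \in I$ and any $r_i$-subset $e$ of $H$, put $j = g(i)$, and run the same chain of equivalences in reverse: $e \in E_i^\calH \iff e \in E_j^{\calH^0} \iff f(e) \in E_j^{\calG^0} \iff f(e) \in E_i^\calG$. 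This closes both directions. The only mild subtlety — and the closest thing to an ``obstacle'' — is making sure that surjectivity of $g$ is used in the right place: it is needed in the forward direction to realize each $j \in J$ as some $g(i)$, whereas the converse direction needs nothing about $g$ beyond its being a function. Everything else is a direct substitution, so no real difficulty is expected.
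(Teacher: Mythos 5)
Your argument is correct: unwinding the definitions of $g$-polymer and embedding, using surjectivity of $g$ exactly where needed in the forward direction, is precisely the content of this lemma, which the paper itself dismisses with the one-word proof ``Obvious.'' Your write-up is simply the fully detailed version of that same routine verification.
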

\begin{proof}
  Obvious.
\end{proof}

\begin{THM}\label{canrp.thm.HGRA-CRP-2}
  Let $\bfr = (r_i)_{i \in I}$ be an arbitrary (not necessarily finite) sequence of positive integers and let
  $\FF = \{\calF_1, \calF_2, \ldots, \calF_m\}$ be a finite family of irreducible finite linearly ordered $\bfr$-hypergraphs.
  Then $\Forb_{\HGraEmb(\bfr)}(\FF)$ has the canonical Ramsey property.
\end{THM}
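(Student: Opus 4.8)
The plan is to reduce Theorem~\ref{canrp.thm.HGRA-CRP-2} to the finite-index Canonical Ramsey Theorem for hypergraphs (Theorem~\ref{canrp.thm.HGRA-CRP}) by compressing the index set $I$ in two steps, using $g$-polymers (Lemma~\ref{canrp.lem.polymer}) for the second step. Fix $\calH, \calE \in \Ob(\Forb_{\HGraEmb(\bfr)}(\FF))$ with $\hom(\calH, \calE) \ne \0$, let $M = \max\{n(\calF_1), \ldots, n(\calF_m), n(\calH), n(\calE)\}$, and split $I = I' \sqcup I''$ with $I' = \{i \in I : r_i \le M\}$ (this $I'$ need not be finite). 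The first step is to observe that the relations indexed by $I''$ are inert: if $r_i > M$ then $E_i^{\calF_t} = \0$ and $E_i^\calH = E_i^\calE = \0$ for purely cardinality reasons, and any copy of $\calF_t$, $\calH$ or $\calE$ has fewer than $r_i$ vertices, so a map between two $\bfr$-hypergraphs whose $I''$-relations are all empty respects those relations automatically. Hence for any such $\bfr$-hypergraph $\calA$ and any $\calB \in \{\calH, \calE, \calF_1, \ldots, \calF_m\}$ one has $\calB \hookrightarrow \calA \iff \reduct{\calB}{I'} \hookrightarrow \reduct{\calA}{I'}$, and the corresponding hom-sets coincide as sets of vertex-maps (compatibly with composition). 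So it is enough to work over the sequence $\bfr' = (r_i)_{i \in I'}$, which now has \emph{bounded} arities, and to produce the required target among $\bfr$-hypergraphs whose $I''$-relations are empty.

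For the second step I would use that, because arities in $\bfr'$ are bounded by $M$ and $\FF, \calH, \calE$ are finite, the assignment $\pi : I' \to J$, $\pi(i) = (r_i,\, E_i^\calH,\, E_i^\calE,\, E_i^{\calF_1}, \ldots, E_i^{\calF_m})$, has a \emph{finite} image $J$. Reading off the coordinates of the elements of $J$ gives a finite sequence $\bfs = (s_j)_{j \in J}$ of positive integers together with $\bfs$-hypergraphs $\calH^0, \calE^0, \calF_1^0, \ldots, \calF_m^0$ (on the same vertex sets and linear orders as $\calH, \calE, \calF_1, \ldots, \calF_m$) of which $\reduct{\calH}{I'}, \reduct{\calE}{I'}, \reduct{\calF_t}{I'}$ are precisely the $\pi$-polymers; each $\calF_t^0$ is still irreducible, since the hyperedge witnessing irreducibility of a pair in $\calF_t$ already has arity $\le M$. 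By Theorem~\ref{canrp.thm.HGRA-CRP} applied to the finite sequence $\bfs$ and the finite family $\FF^0 = \{\calF_1^0, \ldots, \calF_m^0\}$ of irreducible finite linearly ordered $\bfs$-hypergraphs, $\Forb_{\HGraEmb(\bfs)}(\FF^0)$ has the canonical Ramsey property; since $\hom(\calH^0, \calE^0) \ne \0$ (the given embedding $\calH \hookrightarrow \calE$ transports through the first step and Lemma~\ref{canrp.lem.polymer}), there is a $\calG^0 \in \Ob(\Forb_{\HGraEmb(\bfs)}(\FF^0))$ with $\calG^0 \canlongrightarrow (\calE^0)^{\calH^0}$.

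Finally, define $\calG$ to be the $\bfr$-hypergraph on the vertex set of $\calG^0$ whose $I'$-reduct is the $\pi$-polymer of $\calG^0$ and whose $I''$-relations are empty, and lift the partition witness $\calQ^0 \in \Ob(\Forb_{\HGraEmb(\bfs)}(\FF^0))$ supplied by $\calG^0 \canlongrightarrow (\calE^0)^{\calH^0}$ to a $\bfr$-hypergraph $\calQ$ in the same way. Two applications of Lemma~\ref{canrp.lem.polymer} together with the inertness of the $I''$-relations give: (i) $\calG, \calQ \in \Ob(\Forb_{\HGraEmb(\bfr)}(\FF))$, because $\calF_t \hookrightarrow \calG$ (resp.\ $\calF_t \hookrightarrow \calQ$) is equivalent to $\calF_t^0 \hookrightarrow \calG^0$ (resp.\ $\calF_t^0 \hookrightarrow \calQ^0$), which is excluded; and (ii) the hom-sets $\hom_{\HGraEmb(\bfr)}(\calH, \calE)$, $\hom_{\HGraEmb(\bfr)}(\calH, \calG)$, $\hom_{\HGraEmb(\bfr)}(\calE, \calG)$ and $\hom_{\HGraEmb(\bfr)}(\calQ, \calH)$ coincide with the corresponding hom-sets over $\bfs$, compatibly with composition. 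Transporting a coloring $\chi : \hom_{\HGraEmb(\bfr)}(\calH, \calG) \to \omega$ along this identification, applying $\calG^0 \canlongrightarrow (\calE^0)^{\calH^0}$, and reading the resulting copy $w^0$ of $\calE^0$ and morphism $q^0 : \calQ^0 \to \calH^0$ back as a copy of $\calE$ in $\calG$ and a morphism $\calQ \to \calH$ then yields $\calG \canlongrightarrow (\calE)^\calH$. I expect the only real obstacle to be the first step: $g$-polymers preserve arities and therefore cannot merge the unboundedly many arities possibly occurring in $\bfr$, so one must first recognize that the high-arity relations are irrelevant to $\FF, \calH, \calE$ and can be stripped off by a reduct; once arities are bounded, Lemma~\ref{canrp.lem.polymer} makes the rest routine.
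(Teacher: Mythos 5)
Your proposal is correct and follows essentially the same route as the paper: compress the index set to a finite one using that only finitely many distinct hyperedge-sets can occur on the finitely many finite structures in play, apply Theorem~\ref{canrp.thm.HGRA-CRP} over the resulting finite sequence, and transfer the target, the copy of the big structure and the witness object $\calQ$ back via Lemma~\ref{canrp.lem.polymer}; the paper merely packages your profile map differently, by forming the disjoint union of the two given structures and the forbidden ones and choosing a finite set of representative indices $I_0 \subseteq I$, instead of first stripping off the high-arity (empty) relations and then quotienting by the profile (your inclusion of the arity $r_i$ as a coordinate of $\pi$ is a nice touch that keeps the polymer construction well-typed). The only step you leave implicit is the easy check that $\calH^0$ and $\calE^0$ themselves lie in $\Ob(\Forb_{\HGraEmb(\bfs)}(\FF^0))$ before invoking the canonical Ramsey property of that category, which follows from the same inertness-plus-polymer equivalences you already spell out for $\calG$ and $\calQ$.
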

\begin{proof}
  Fix an arbitrary sequence $\bfr = (r_i)_{i \in I}$ of positive integers and
  a finite family $\FF = \{\calF_1, \calF_2, \ldots, \calF_m\}$ of irreducible finite linearly ordered $\bfr$-hypergraphs
  where $\calF_j = (F_j, (E_i^{\calF_j})_{i \in I}, \Boxed{<^{\calF_j}})$, $1 \le j \le m$.
  Take any $\calA =(A, (E_i^{\calA})_{i \in I}, \Boxed{<^{\calA}})$ and $\calB = (B, (E_i^{\calB})_{i \in I}, \Boxed{<^{\calB}})$
  from $\Ob(\Forb_{\HGraEmb(\bfr)}(\FF))$ such that $\calA \hookrightarrow \calB$. Without loss of generality we can assume that
  all the sets $A$, $B$, $F_1$, \ldots, $F_m$ are pairwise disjoint.

  Let $\calH = (H, (E_i^{\calH})_{i \in I}, \Boxed{<^{\calH}})$ be the disjoint union of linearly ordered
  $\bfr$-hypergraphs $\calA$, $\calB$, $\calF_1$, \ldots, $\calF_m$ so that
  \begin{align*}
    H &= A \union B \union F_1 \union \ldots \union F_m,\\
    E_i^{\calH} &= E_i^\calA \union E_i^\calB \union E_i^{\calF_1} \union \ldots \union E_i^{\calF_m}, \text{ for each } i \in I, \text{ and }\\
    \Boxed{<^{\calH}} &= \Boxed{<^\calA} \oplus \Boxed{<^\calB} \oplus \Boxed{<^{\calF_1}} \oplus \ldots \oplus \Boxed{<^{\calF_m}}.
  \end{align*}
  Here, for disjoint linearly ordered sets $(L, \Boxed<)$ and $(M, \Boxed\sqsubset)$ by $\Boxed< \oplus \Boxed\sqsubset$
  we denote the linear order on $L \union M$ where every element of $L$
  is smaller then every element of $M$, the elements in $L$ are ordered linearly by~$<$, and the elements in $M$
  are ordered linearly by~$\sqsubset$.

  Clearly, $E_i^\calH = \0$ whenever $r_i > |H|$. On the other hand,
  for every $r_i \le |H|$ there are only finitely many
  possibilities to choose $E_i^\calH \subseteq \binom{H}{r_i}$.
  Therefore, there exists a \emph{finite} set $I_0 \subseteq I$ such that
  for every $i \in I$ we have that $E_i^\calH = E_{i_0}^\calH$ for some $i_0 \in I_0$.
  Define a surjective map $g : I \to I_0$ as follows:
  \begin{itemize}
  \item
    for $i_0 \in I_0$ put $g(i_0) = i_0$;
  \item
    for $i \in I \setminus I_0$ choose any $i_0 \in I_0$ such that $E_i^\calH = E_{i_0}^\calH$ and put
    $g(i) = i_0$.
  \end{itemize}
  Since $\calG \hookrightarrow \calH$ for every $\calG \in \{\calA, \calB, \calF_1, \ldots, \calF_m\}$,
  we have that
  \begin{equation}\label{canrp.eq.polymer}
    E_i^\calG = E_{g(i)}^\calG \text{ for every } \calG \in \{\calA, \calB, \calF_1, \ldots, \calF_m\}
    \text{ and every } i \in I.
  \end{equation}
  Now, let $\bfr_0 = (r_i)_{i \in I_0}$ and let $\calA^0$, $\calB^0$ and $\calF^0_j$, $1 \le j \le m$,
  be the $I_0$-reducts of $\calA$, $\calB$ and $\calF_j$, $1 \le j \le m$, respectively.
  Note that, by \eqref{canrp.eq.polymer},
  we also have that $\calA$, $\calB$ and $\calF_j$, $1 \le j \le m$, are $g$-polymers of
  $\calA^0$, $\calB^0$ and $\calF^0_j$, $1 \le j \le m$, respectively.

  Let us show that each $\calF^0_j$, $1 \le j \le m$, is irreducible. Take any $j \in \{1, \ldots, m\}$
  and distinct $a, b \in F_j$. Since $\calF_j$ is irreducible, there exists and $i \in I$ and a
  hyperedge $e \in E^{\calF_j}_i$ such that $a, b \in e$. But $E^{\calF_j}_i = E^{\calF_j}_{g(i)}$
  and $g(i) \in I_0$. Therefore, $e \in E^{\calF_j}_{g(i)}$ is a hyperedge of $\calF^0_j$.

  Since $\bfr_0$ is a finite sequence of positive integers and
  $\FF^0 = \{\calF^0_1, \calF^0_2, \ldots, \calF^0_m\}$ is a family of irreducible
  linearly ordered $\bfr_0$-hypergraphs, $\Forb_{\HGraEmb(\bfr_0)}(\FF^0)$ has the canonical Ramsey property by
  Theorem~\ref{canrp.thm.HGRA-CRP}.
  
  It is easy to see that $\calA^0, \calB^0 \in \Ob(\Forb_{\HGraEmb(\bfr_0)}(\FF^0))$. Namely,
  if $f : \calF^0_j \hookrightarrow \calA^0$ is an embedding for some~$j$ then
  $f : \calF_j \hookrightarrow \calA$ is also an embedding because of Lemma~\ref{canrp.lem.polymer} and the fact that
  $\calF_j$ and $\calA$ are $g$-polymers of $\calF_j^0$ and $\calA^0$, respectively.
  This contradicts the choice of~$\calA$. The same argument applies to $\calB^0$.

  Therefore, there is a $\calC^0 = (C, (E_{i_0}^{\calC^0})_{i_0 \in I_0}, \Boxed{<^{\calC^0}}) \in
  \Ob(\Forb_{\HGraEmb(\bfr_0)}(\FF^0))$ such that
  $\calC^0 \canlongrightarrow (\calB^0)^{\calA^0}$.
  Let $\calC = (C, (E_{i}^{\calC})_{i \in I}, \Boxed{<^{\calC^0}})$ be the $g$-polymer of $\calC^0$.
  As above, we easily conclude that $\calC \in \Forb_{\HGraEmb(\bfr)}(\FF)$. So, let us show that
  $\calC \canlongrightarrow (\calB)^\calA$.
  
  Take any $\omega$-coloring $\chi : \hom_{\HGraEmb(\bfr)}(\calA, \calC) \to \omega$. Since
  $\calA$, $\calB$ and $\calC$ are $g$-polymers of $\calA^0$, $\calB^0$ and $\calC^0$, respectively,
  it follows from Lemma~\ref{canrp.lem.polymer} that
  \begin{align*}
    \hom_{\HGraEmb(\bfr)}(\calA, \calC) &= \hom_{\HGraEmb(\bfr_0)}(\calA^0, \calC^0),\\
    \hom_{\HGraEmb(\bfr)}(\calA, \calB) &= \hom_{\HGraEmb(\bfr_0)}(\calA^0, \calB^0), \text{ and}\\
    \hom_{\HGraEmb(\bfr)}(\calB, \calC) &= \hom_{\HGraEmb(\bfr_0)}(\calB^0, \calC^0).
  \end{align*}
  Therefore, $\chi : \hom_{\HGraEmb(\bfr_0)}(\calA^0, \calC^0) \to \omega$ is a well-defined $\omega$-coloring.

  Since $\calC^0 \canlongrightarrow (\calB^0)^{\calA^0}$ in $\HGraEmb(\bfr_0)$,
  there is a $w \in \hom_{\HGraEmb(\bfr_0)}(\calB^0, \calC^0)$, an object $\calQ^0 \in \Ob(\HGraEmb(\bfr_0))$
  and a morphism $q \in \hom_{\HGraEmb(\bfr_0)}(\calQ^0, \calA^0)$ such that for all
  $f, g \in \hom_{\HGraEmb(\bfr_0)}(\calA^0, \calB^0)$ we have:
  $\chi(w \cdot f) = \chi(w \cdot g)$ if and only if $f \cdot q = g \cdot q$.
  Let $\calQ$ be the $g$-polymer of~$\calQ^0$. Then $\calQ \in \Ob(\HGraEmb(\bfr))$
  and $q \in \hom_{\HGraEmb(\bfr)}(\calQ, \calA)$ by Lemma~\ref{canrp.lem.polymer}, so
  for all $f, g \in \hom_{\HGraEmb(\bfr)}(\calA, \calB)$ we have:
  $\chi(w \cdot f) = \chi(w \cdot g)$ if and only if $f \cdot q = g \cdot q$.
  This completes the proof.
\end{proof}

\begin{THM}[Canonical Ne\v set\v ril-R\"odl Theorem]\label{canrp.thm.cNRT}
  Let $\Theta = (R_i)_{i \in I}$ be a sequence of finitary relational symbols and let $<$
  be a binary relational symbol not in $\Theta$. Let $\FF \subseteq \Ob(\REL(\Theta, \Boxed<))$ be a set
  consisting of irreducible linearly ordered $\Theta$-relational structures.
  If at least one of the sets $I$, $\FF$ is finite
  then $\Forb_{\REL(\Theta, \Boxed<)}(\FF)$ has the canonical Ramsey property.
\end{THM}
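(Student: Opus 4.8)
The plan is to reduce the statement to the canonical Ramsey property for hypergraphs, which is already available in two complementary forms: Theorem~\ref{canrp.thm.HGRA-CRP} for a finite sequence of arities together with an arbitrary family of irreducible forbidden hypergraphs, and Theorem~\ref{canrp.thm.HGRA-CRP-2} for an arbitrary sequence of arities together with a finite family of irreducible forbidden hypergraphs. The bridge is the classical device of encoding a \emph{tuple} in a relation by the \emph{set} of its entries together with the pattern that reconstructs the tuple from that set once the set is listed in increasing order. Concretely, for each $i \in I$, each $t \in \{1, \ldots, r_i\}$ and each surjection $\sigma : \{1, \ldots, r_i\} \to \{1, \ldots, t\}$ I would introduce a $t$-ary hyperedge symbol $E_{(i,\sigma)}$, thereby obtaining a sequence $\bfr = (r_{(i,\sigma)})_{(i,\sigma)}$ of positive integers indexed by these pairs, with $r_{(i,\sigma)} = t$. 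For a linearly ordered $\Theta$-structure $\calA = (A, (R_i^\calA)_i, \Boxed{<^\calA})$ define $\Psi(\calA)$ to be the linearly ordered $\bfr$-hypergraph on $(A, \Boxed{<^\calA})$ whose $E_{(i,\sigma)}$-hyperedges are exactly the $t$-element sets $\{y_1 <^\calA \cdots <^\calA y_t\}$ for which $(y_{\sigma(1)}, \ldots, y_{\sigma(r_i)}) \in R_i^\calA$, and put $\Psi(f) = f$ on morphisms.

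The first block of routine verification is that $\Psi$ is an isomorphism of categories $\REL(\Theta, \Boxed<) \cong \HGraEmb(\bfr)$. Since a tuple determines its underlying set and, once that set is listed in increasing order, the surjection $\sigma$ witnessing it, the assignment $R_i^\calA \mapsto (E_{(i,\sigma)}^{\Psi(\calA)})_\sigma$ is a bijection; running it backwards gives the inverse functor on objects, and both functors act as the identity on underlying functions. A function $f : A \to B$ is a $\Theta$-embedding $\calA \hookrightarrow \calB$ iff it is injective, $<$-preserving and reflects every $R_i$; because a $<$-preserving map carries the increasing listing of a set onto the increasing listing of its image, this happens iff $f$ is a $\bfr$-hypergraph embedding $\Psi(\calA) \hookrightarrow \Psi(\calB)$. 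Hence $\Psi$ is full and faithful, and being bijective on objects it is a category isomorphism; in particular, writing $\FF' = \Psi(\FF)$, it restricts to an isomorphism $\Forb_{\REL(\Theta,\Boxed<)}(\FF) \cong \Forb_{\HGraEmb(\bfr)}(\FF')$. One also checks that each member of $\FF'$ is an irreducible finite linearly ordered $\bfr$-hypergraph: if $a \ne b$ occur in a tuple of $R_i^{\calF}$, then $a, b$ lie in the corresponding $E_{(i,\sigma)}$-hyperedge of $\Psi(\calF)$.

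It then remains to feed this into the appropriate hypergraph theorem according to which of $I$, $\FF$ is finite. If $I$ is finite, then so is each $r_i$ and hence so is the index set of $\bfr$, i.e.\ $\bfr$ is a finite sequence; Theorem~\ref{canrp.thm.HGRA-CRP} then gives that $\Forb_{\HGraEmb(\bfr)}(\FF')$ has the canonical Ramsey property. If instead $\FF$ is finite, then $\FF'$ is a finite family of irreducible finite linearly ordered $\bfr$-hypergraphs over a possibly infinite sequence $\bfr$, and Theorem~\ref{canrp.thm.HGRA-CRP-2} applies to yield the same conclusion. In either case, transporting along the isomorphism $\Forb_{\REL(\Theta,\Boxed<)}(\FF) \cong \Forb_{\HGraEmb(\bfr)}(\FF')$ and using that isomorphic categories share the canonical Ramsey property proves the theorem.

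The genuine content is all inherited from the hypergraph results; what needs care here is only the encoding itself, and within it the handling of tuples with repeated entries -- which is exactly why $\sigma$ is allowed to range over all surjections onto all $\{1, \ldots, t\}$ with $t \le r_i$, not merely the bijections -- together with the bookkeeping that makes $\bfr$ finite precisely when $I$ is. I expect no serious obstacle; if anything, the one point deserving precision is the claim that $\Psi$ is bijective on objects, where one must check that an \emph{arbitrary} assignment of $t$-element hyperedges to the symbols $E_{(i,\sigma)}$ re-encodes to a relation that in turn encodes back to it, and this is where one uses that distinct $\sigma$'s produce tuples with distinct underlying-set-plus-pattern data.
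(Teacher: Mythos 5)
Your proposal is correct and follows essentially the same route as the paper: encode each relation $R_i$ by a family of hyperedge symbols indexed by the repetition/order pattern of a tuple, observe that this gives an isomorphism of categories carrying $\Forb_{\REL(\Theta,\Boxed<)}(\FF)$ onto a class of the form $\Forb_{\HGraEmb(\bfr)}(\FF')$ with $\FF'$ irreducible, and then invoke Theorem~\ref{canrp.thm.HGRA-CRP} when $I$ is finite and Theorem~\ref{canrp.thm.HGRA-CRP-2} when $\FF$ is finite. Your surjections $\sigma:\{1,\ldots,r_i\}\twoheadrightarrow\{1,\ldots,t\}$ are just another packaging of the paper's total quasiorders (its $\tp$/$\mat$/$\tup$ machinery), so the two arguments coincide in substance.
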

\begin{proof}
  We start by recalling some basic facts about total quasiorders.
  A total quasiorder is a reflexive and transitive binary relation such that each pair of elements
  of the underlying set is comparable. Each total quasiorder $\sigma$ on a set $I$ induces an equivalence relation $\equiv_\sigma$
  on $I$ and a linear order $\sqsubset_\sigma$ on $I / \Boxed{\equiv_\sigma}$ in a natural way: $i \mathrel{\equiv_\sigma} j$ if
  $(i, j) \in \sigma$ and $(j, i) \in \sigma$, and $(i / \Boxed{\equiv_\sigma}) \mathrel{\sqsubset_\sigma} (j / \Boxed{\equiv_\sigma})$
  if $(i, j) \in \sigma$ and $(j, i) \notin \sigma$.

  Let $(A, \Boxed<)$ be a linearly ordered set, let $r$ be a positive integer, let $I = \{1, \ldots, r\}$ and
  let $\overline a = (a_1, \ldots, a_r) \in A^r$. Then
  $$
    \tp(\overline a) = \{(i, j) : a_i \le a_j \}
  $$
  is a total quasiorder on $I$ which we refer to as the \emph{type} of~$\overline a$.
  Assume that $\sigma = \tp(\overline a)$. Let $s = |I / \Boxed{\equiv_\sigma}|$ and let $i_1$, \ldots, $i_s$
  be the representatives of the classes of $\equiv_\sigma$ enumerated so that
  $(i_1 / \Boxed{\equiv_\sigma}) \mathrel{\sqsubset_\sigma} \ldots \mathrel{\sqsubset_\sigma} (i_s / \Boxed{\equiv_\sigma})$.
  Then
  $$
    \mat(\overline a) = \{a_{i_1}, \ldots, a_{i_s}\}
  $$
  is the \emph{matrix of $\overline a$}. Conversely, given a matrix and a total
  quasiorder we can always reconstruct the original tuple as follows. For a total quasiorder $\sigma$ on $I$ such that
  $|I / \Boxed{\equiv_\sigma}| = s$ and an $s$-element set $\mu = \{b_1, \ldots, b_s\} \in \binom As$ such that $b_1 < \ldots < b_s$
  define an $r$-tuple
  $$
    \tup(\sigma, \mu) = (a_1, \ldots, a_r) \in A^r
  $$
  as follows. Let $i_1$, \ldots, $i_s$ be the representatives of the classes of $\equiv_\sigma$ enumerated so that
  $(i_1 / \Boxed{\equiv_\sigma}) \mathrel{\sqsubset_\sigma} \ldots \mathrel{\sqsubset_\sigma} (i_s / \Boxed{\equiv_\sigma})$.
  Then put
  $$
    a_\eta = b_\xi \text{ if and only if } \eta \mathrel{\equiv_\sigma} i_\xi.
  $$
  (In other words, we put $b_1$ on all the entries in $i_1 / \Boxed{\equiv_\sigma}$, we put
  $b_2$ on all the entries in $i_2 / \Boxed{\equiv_\sigma}$, and so on.) Then it is a matter of routine to check that
  \begin{equation}\label{nrt.eq.tup-mat}
    \begin{aligned}
      \tp(\tup(\sigma, \mu)) = \sigma, \text{ } \mat(\tup(\sigma, \mu))    &= \mu, \text{ and}\\
      \tup(\tp(\overline a), \mat(\overline a)) &= \overline a,\\
    \end{aligned}
  \end{equation}
  
  Now, for each $i \in I$ let $\Sigma_i$ be the set of all the total quasiorders on
  $\{1, 2, \ldots, \arity(R_i)\}$. Let
  $
    J = \bigcup_{i \in I} \{i\} \times \Sigma_i,
  $
  and for $j = (i, \sigma) \in J$ let $s_j = |\{1, 2, \ldots, \arity(R_i)\} / \Boxed{\equiv_\sigma}|$.
  Finally, put $\bfs = (s_j)_{j \in J}$.
  
  For $\calA = (A, \Theta^\calA, \Boxed{<^\calA}) \in \Ob(\REL(\Theta, \Boxed{<}))$
  define a $\calA^\dagger = (A, (E_j^{\calA^\dagger})_{j \in J}, \Boxed{<^{\calA}}) \in \Ob(\HGraEmb(\bfs))$ as follows:
  $$
    E_{(i, \sigma)}^{\calA^\dagger} = \{\mat(\overline a): \overline a \in R_i^\calA \text{ and } \tp(\overline a) = \sigma \}.
  $$
  On the other hand, take any
  $\calB = (B, (E_j^\calB)_{j \in J}, \Boxed{<^\calB}) \in \Ob(\HGraEmb(\bfs))$
  and define $\calB^* = (B, \Theta^{\calB^*}, \Boxed{<^{\calB}}) \in \Ob(\REL(\Theta, \Boxed<))$ as follows:
  $$
    R_i^{\calB^*} = \{\tup(\sigma, \mu) : \sigma \in \Sigma_i \text{ and } \mu \in E_{(i, \sigma)}^\calB\}.
  $$
  Because of \eqref{nrt.eq.tup-mat} we have that $(\calA^\dagger)^* = \calA$ and $(\calB^*)^\dagger = \calB$ for all
  $\calA \in \Ob(\REL(\Theta, \Boxed{<}))$ and all $\calB \in \Ob(\HGraEmb(\bfs))$.
  Therefore, the functor
  $$
    H : \REL(\Theta, \Boxed{<}) \to \HGraEmb(\bfs) : \calA \mapsto \calA^\dagger : f \mapsto f
  $$
  is an isomorphism between the categories $\REL(\Theta, \Boxed{<})$ and $\HGraEmb(\bfs)$, its inverse being
  $$
    G : \HGraEmb(\bfs) \to \REL(\Theta, \Boxed{<}) : \calB \mapsto \calB^* : f \mapsto f.
  $$
  Consequently, the categories $\Forb_{\REL(\Theta, \Boxed{<})}(\FF)$ and $\Forb_{\HGraEmb(\bfs)}(H(\FF))$ are isomorphic,
  the isomorphisms being the adequate restrictions of $H$ and~$G$.
  
  Clearly, if $I$ is a finite set then $J$ is also a finite set. So, at least one of the sets
  $J$, $H(\FF)$ is finite, where $H(\FF) = \{H(F) : F \in \FF\}$. It is also easy to see that $H(F)$ is an
  irreducible $\bfs$-hypergraph for each $F \in \FF$ (since each $F \in \FF$ is irreducible).
  Therefore, Theorems~\ref{canrp.thm.HGRA-CRP} and~\ref{canrp.thm.HGRA-CRP-2} imply that the category
  $\Forb_{\HGraEmb(\bfs)}(H(\FF))$ has the canonical Ramsey property. Since, as we have just seen,
  $\Forb_{\HGraEmb(\bfs)}(H(\FF))$ is isomorphic to $\Forb_{\REL(\Theta, \Boxed{<})}(\FF)$, the latter category
  also has the canonical Ramsey property.
\end{proof}

\begin{COR}
  The category $\OOGRAEmb$ has the canonical Ramsey property.
\end{COR}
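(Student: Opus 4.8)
The plan is to exhibit $\OOGRAEmb$ as a forbidden-substructure category over a single binary relational symbol and then quote Theorem~\ref{canrp.thm.cNRT} (the canonical Ne\v set\v ril-R\"odl theorem). Set $\Theta = (R)$ with $R$ a binary relational symbol, and consider the category $\REL(\Theta, \Boxed<)$ of finite linearly ordered structures $(V, R^V, \Boxed{<^V})$. A linearly ordered oriented graph is precisely such a structure in which $R^V$ is irreflexive and contains no $2$-cycle, so I would capture these two prohibitions by the family $\FF = \{\calF_1, \calF_2\}$, where $\calF_1$ is the one-vertex structure carrying a loop and $\calF_2$ is the two-vertex structure $\{a < b\}$ with $R^{\calF_2} = \{(a,b),(b,a)\}$.

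First I would verify that $\FF$ consists of irreducible finite linearly ordered $\Theta$-structures: $\calF_1$ is vacuously irreducible (it has no pair of distinct vertices), and in $\calF_2$ the unique pair $\{a,b\}$ is witnessed by the tuple $(a,b) \in R^{\calF_2}$. Next I would check that $\Forb_{\REL(\Theta,\Boxed<)}(\FF)$ is literally the category $\OOGRAEmb$. An embedding $\calF_1 \hookrightarrow (V, R^V, \Boxed{<^V})$ exists exactly when some $x \in V$ has $(x,x) \in R^V$, and an embedding $\calF_2 \hookrightarrow (V, R^V, \Boxed{<^V})$ exists exactly when some $x <^V y$ have both $(x,y) \in R^V$ and $(y,x) \in R^V$ --- here one uses that an embedding of relational structures both preserves and reflects $R$, and that linearity of $<^V$ forces an order-preserving injection to send $a,b$ to a correctly ordered pair. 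Hence the objects of $\Forb_{\REL(\Theta,\Boxed<)}(\FF)$ are exactly the finite linearly ordered $(R)$-structures with $R^V$ irreflexive and satisfying $(v_1,v_2) \in R^V \Rightarrow (v_2,v_1) \notin R^V$, i.e.\ the finite linearly ordered oriented graphs; and since $\Forb_{\REL(\Theta,\Boxed<)}(\FF)$ is by definition a full subcategory of $\REL(\Theta,\Boxed<)$ whose morphisms are embeddings, it coincides with $\OOGRAEmb$.

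Finally, both $I = \{1\}$ and $\FF$ are finite, so Theorem~\ref{canrp.thm.cNRT} applies and yields that $\Forb_{\REL(\Theta,\Boxed<)}(\FF)$, hence $\OOGRAEmb$, has the canonical Ramsey property. There is no genuine obstacle in this argument; the only points demanding attention are the (easy) irreducibility check for $\calF_1$ and $\calF_2$ and the verification that forbidding exactly these two structures is equivalent to the two oriented-graph axioms. One could instead try to treat $\OOGRAEmb$ as a hereditary subcategory of $\REL((R),\Boxed<)$ and invoke Theorem~\ref{canrp.thm.1}, but then closure under binary diagrams would have to be established, and the naive ``union of images'' construction that works for $\EPosEmb$ does not eliminate $2$-cycles spanning two different copies --- so the forbidden-substructure route is the clean one.
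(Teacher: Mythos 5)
Your proposal is correct and follows essentially the same route as the paper: forbid the one-vertex loop and the two-vertex $2$-cycle in $\REL((R),\Boxed<)$, check irreducibility and that the resulting category is exactly $\OOGRAEmb$, and invoke Theorem~\ref{canrp.thm.cNRT} with both $I$ and $\FF$ finite. No discrepancies worth noting.
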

\begin{proof}
  Let $R$ be a binary relational symbol and let $\mathbf 2 = (R)$.
  Let $\calF_1$ and $\calF_2$ be the following linearly ordered $\mathbf 2$-relational structures:
  \begin{align*}
    \calF_1 &= (\{1\}, \{(1,1)\}, \Boxed<), \text{ and}\\
    \calF_2 &= (\{1, 2\}, \{(1,2), (2,1)\}, \Boxed<),
  \end{align*}
  where $<$ is the usual ordering of the integers. Then it is easy to see that
  $\Forb_{\REL(\mathbf 2, \Boxed{<})}(\{\calF_1, \calF_2\}) = \OOGRAEmb$. Since
  $\calF_1$ and $\calF_2$ are irreducible, Theorem~\ref{canrp.thm.cNRT} yields that
  $\Forb_{\REL(\mathbf 2, \Boxed{<})}(\{\calF_1, \calF_2\}) = \OOGRAEmb$ has the canonical Ramsey property.
\end{proof}

\section{Acknowledgements}

The author gratefully acknowledges the support of the Grant No.\ 174019 of the Ministry of Education, Science and Technological Development of the Republic of Serbia.

\end{document}